%\documentstyle[12pt]{amsart}
%
%
%\newtheorem{theorem}{Theorem}[section]
%\newtheorem{corollary}[theorem]{Corollary}
%\newtheorem{remark}[theorem]{Remark}
%\newtheorem{lemma}[theorem]{Lemma}
%\newtheorem{proposition}[theorem]{Proposition}
%\newtheorem{definition}{Definition}[section]
%
%\def\bee{\begin{eqnarray}}
%\def\beee{\begin{eqnarray*}}
%\def\eee{\end{eqnarray}}
%\def\eeee{\end{eqnarray*}}
%\def\nn{\nonumber}
%\def\ba{\begin{array}}
%\def\ea{\end{array}}
%
%\def\R{\mathbb R}
%\def\nn{\nonumber}
%
%
%\setlength{\textwidth}{6in} \setlength{\oddsidemargin}{.25in}
%\setlength{\evensidemargin}{.25in} \setlength{\textheight}{8.5in}
%\setlength{\topmargin}{-0.2 in} \setlength{\headheight}{2ex}
%\setlength{\headsep}{2ex}
%\newcommand{\sss}{\setcounter{equation}{0}}
%\numberwithin{equation}{section}

%\documentstyle[12pt]{amsart}
\documentclass[12pt]{amsart}

\usepackage{color}

\usepackage{amsfonts,latexsym,rawfonts,amsmath,amssymb,amsthm,graphicx}
\textheight=9 in \textwidth=6.6in \topmargin=-6mm
\oddsidemargin=0mm \evensidemargin=0mm

\newtheorem{theorem}{Theorem}[section]
\newtheorem{corollary}[theorem]{Corollary}

\newtheorem{lemma}[theorem]{Lemma}

\newtheorem{definition}{Definition}[section]

\def\bee{\begin{eqnarray}}
\def\beee{\begin{eqnarray*}}
\def\eee{\end{eqnarray}}
\def\eeee{\end{eqnarray*}}

\def\ba{\begin{array}}
\def\ea{\end{array}}

\def\R{\mathbb R}
\def\div{\mathrm{div }}

\setlength{\textwidth}{6in} \setlength{\oddsidemargin}{.25in}
\setlength{\evensidemargin}{.25in} \setlength{\textheight}{8.5in}
\setlength{\topmargin}{-0.2 in} \setlength{\headheight}{2ex}
\setlength{\headsep}{2ex}

\numberwithin{equation}{section}

\begin{document}

\title[harmonic maps]
{Global existence of the harmonic map heat flow into Lorentzian manifolds}

\author{ Xiaoli Han, J\"{u}rgen Jost, Lei Liu, Liang Zhao}

\address{Xiaoli Han, Department of Mathematical Sciences, Tsinghua University \\ Beijing 100084, P. R. of China}
\email{xlhan@math.tsinghua.edu.cn}

\address{J\"{u}rgen Jost, Max Planck Institute for Mathematics in the Sciences \\ Inselstrasse 22\\ 04103 Leipzig, Germany}
\email{jjost@mis.mpg.de}

\address{Lei Liu, Max Planck Institute for Mathematics in the Sciences\\ Inselstrasse 22\\ 04103 Leipzig, Germany}
\email{lei.liu@math.uni-freiburg.de}

\address{Liang Zhao, School of Mathematical Sciences,
Beijing Normal University\\
Laboratory of Mathematics and
Complex Systems, Ministry of Education\\ Beijing 100875, P. R. China}
\email{liangzhao@bnu.edu.cn}

\thanks {The authors would like to thank the referee for detailed and useful comments. The research is supported by the National Natural Science Foundation of China, No.11131007, No.11471014, No.11471299, the Fundamental Research Funds for the Central Universities.}

\subjclass[2010]{53C43, 53C50, 58E20}
\keywords{heat flow, harmonic map, Lorentzian manifold, warped product, blow up}

\date{\today}

\maketitle

\begin{abstract}

We investigate a parabolic-elliptic system for maps $(u,v)$ from a compact Riemann surface $M$ into a Lorentzian manifold $N\times{\mathbb{R}}$ with a warped product metric. That system turns the harmonic map type equations into a parabolic system, but keeps the $v$-equation as a nonlinear second order constraint along the flow.  We prove a global existence result of the parabolic-elliptic system by assuming either some geometric conditions on the target Lorentzian manifold or small energy of the initial maps. The result implies the existence of a Lorentzian harmonic map in a given homotopy class with fixed boundary data.\\

R\'esum\'e: Nous \'etudions un syst\`eme parabolique-elliptique pour les applications $(u,v)$ d'une surface  Riemannienne compacte $M$  dans une vari\'e t\'e  lorentzienne $N \times \R$  avec une métrique de produit d\'eform\'ee. Nous transformons les \'equations de type application harmonique en un syst\'eme parabolique, mais conservons l'\'equation de $v$  comme contrainte non lin\'eaire du second ordre le long du flux. Nous d\'emontrons  un r\'esultat global de l'existence du syst\'eme parabolique-elliptique en supposant soit certaines conditions g\'eom\'etriques sur la vari\'e t\'e lorentzienne  ou la petitesse de l'\'energie des valeurs initiales. Le r\'esultat implique l'existence d'une application harmonique lorentzienne  dans une classe d'homotopie donn\'ee avec des donn\'ees a bord fixe.

\end{abstract}

\section{Introduction}

Suppose $(M,h)$ and $(N,g)$ are  compact Riemannian manifolds of dimension $m$ and $n$ respectively. For a map $u\in C^2(M, N)$, the energy functional of $u$ is defined as
\begin{equation}\label{renergy}
E(u)=\frac{1}{2}\int_M |\nabla u|^2dv_h.
\end{equation}
A critical point of the energy functional $E$ in $C^2(M, N)$ is called a harmonic map. By Nash's embedding theorem \cite{Nash}, we can embed $N$ isometrically into some Euclidian space $\R^K$ and the corresponding Euler-Lagrange equation is
$$
\Delta_h u=A(u)(\nabla u,\nabla u),
$$
where $\Delta_h$ is the Laplace-Beltrami operator on $M$ with respect to $h$ and $A$ is the second fundamental form of $N\subset \mathbb{R}^K$. More generally, we define the tension field $\tau (u)$ as
$$
\tau (u)=-\Delta_h u+A(u)(\nabla u,\nabla u).
$$
Thus, $u$ is harmonic if and only if $\tau(u)=0$.

Harmonic maps constitute one of the model problems of geometric analysis and have been widely and systematically studied  for several decades. For example, the methods used in the study of harmonic maps can be adapted to the study of constant mean curvature surfaces, pseudo-holomorphic curves, etc.  In physics, harmonic maps arise as  a mathematical representation of the nonlinear sigma model. This leads to several generalizations. For example, motivated by the supersymmetric sigma model, the map can be coupled with a spinor field, see  \cite{chenjostliwang} and \cite{jostliuzhu} and the references therein.

From another perspective, that of general relativity, it is also natural to replace  the target of the harmonic maps by  Lorentzian manifolds. Recent work on
minimal surfaces in anti-de-Sitter space and their applications in theoretical physics (see e.g. Alday and Maldacena\cite{aldaymaldacena}) shows the importance of this extension. Geometrically, the link between harmonic maps into ${\mathbb{S}}_1^4$ and the conformal Gauss maps of Willmore surfaces in ${\mathbb{S}}^3$ \cite{bryant} also naturally leads to such  harmonic maps. The existence of geodesics in Lorentzian manifolds was studied in \cite{bencifortunatogiannoni}. Variational methods for  such harmonic maps were developed in \cite{greco1} and \cite{greco2}. The regularity of weak solutions was studied in \cite{zhu}, and in \cite{hanjostliuzhao,hanzhaozhu}  energy identities for harmonic map sequences were obtained.

In this paper, we shall address the existence problem for harmonic maps from Riemann surfaces to standard static Lorentzian manifolds (to be defined shortly).

Let us now state our results. Let $M$ be a compact Riemannian manifold with a smooth boundary $\partial M$ and  $N\times{\mathbb{R}}$ be a Lorentzian manifold equipped with a warped product metric of the following form
$$g=g_N-\beta (d\theta)^2,$$
where $({\mathbb{R}}, d\theta^2)$ is the $1$-dimensional Euclidean space, $(N, g_N)$ is an $n$-dimensional compact Riemannian manifold embedded into ${\mathbb{R}^K}$ and $\beta$ is a positive $C^{\infty}$ function on $N$. Since $N$ is compact, there exist two positive constants $\lambda$ and $\Lambda$ such that$$0<\lambda\leq\beta(y)\leq\Lambda<\infty,\ \forall\ y\in N.$$ In fact, a more general form of the warped product metric is
\begin{equation}\label{warpmetric}
g=g_N-\beta(d\theta+\omega)^2,
\end{equation}
where $\omega$ is a $1$-form on $N$. A Lorentzian manifold with a metric of the form \eqref{warpmetric} is called a standard static manifold. For more details on such manifolds, we refer to \cite{KSHM, ONeill}. To simplify the problem, throughout this paper, we assume that $\omega=0$; the case $\omega\neq 0$ will be discussed in future work.

For  $(u,v)\in C^2(M,N\times {\mathbb{R}})$, we consider the following functional
\begin{equation}\label{lag}
E_g(u, v; M)= \frac{1}{2}\int_{M} \left\{|\nabla u|^2- \beta(u)|\nabla v|^2 \right\} dv_h,
\end{equation}
which is called the Lorentzian energy of the map $(u,v)$ on $M$. A critical point $(u,v)$ in $C^2(M,N\times{\mathbb{R}})$ of the functional \eqref{lag} is called a harmonic map from $(M,h)$ into the Lorentzian manifold  $(N\times{\mathbb{R}},g)$. Via direct calculation, one can derive the  Euler-Lagrange equations for \eqref{lag},
\begin{equation}\label{Eleq}
 \left\{
  \begin{array}{cc}
    -\Delta u    = A(u)(\nabla u, \nabla u)-B^{\top}(u)|\nabla v|^2,\ &in\ M\\
     -\div\{\beta(u)\nabla v\} = 0 ,\ &in \ M\\
  \end{array}
\right.
\end{equation}
where $A$ is the second fundamental form of $N$ in ${\mathbb{R}}^K$, $B(u):=(B^1, B^2, \cdots, B^K)$ with
$$
B^j:=-\frac{1}{2}\frac{\partial\beta(u)}{\partial y^j}
$$
and $B^{\top}$ is the tangential part of $B$ along the map $u$. For  details, see \cite{zhu}.

Let us explain  some notations first. For $\Omega\subset M$, we put
$$
E(u,v;\Omega)=\frac{1}{2}\int_{\Omega} \left\{|\nabla u|^2+ |\nabla v|^2 \right\} dx,
$$
$$
E(u;\Omega)=\frac{1}{2}\int_{\Omega} |\nabla u|^2 dx,
$$
and
$$
E(v;\Omega)=\frac{1}{2}\int_{\Omega} |\nabla v|^2 dx.
$$
For brevity, we will omit $\Omega$ if the domain is clear from the context.

\

The existence of harmonic maps in a fixed homotopy class is one of the most fundamental and important problem among those problems related to harmonic maps. When the target manifold is a Riemannian manifold whose sectional curvature is non-positive, Al'ber \cite{alber1,alber2},  Eells-Sampson \cite{EellsSampson} and Hamilton \cite{hamilton} studied a parabolic system, called the harmonic map heat flow, which can be interpreted as a gradient flow for the energy functional \eqref{renergy}, and therefore the energy of the map decreases along the flow. By using the Bochner formula technique, they obtained a global regular solution of the harmonic map flow that asymptotically converges to a  harmonic map in the same homotopy class. When the target manifold is a general Riemannian manifold, the harmonic map flow may develop singularities in finite time even in dimension two \cite{Chang-Ding-Ye}. Nevertheless, a  global weak solution could be constructed in \cite{St}.

As in the Riemannian case, it is also desirable to find the conditions for the existence of a  harmonic map in a given homotopy class when the target manifold is Lorentzian. A key difficulty arises from the fact that the functional $E_g$ is not bounded from below. Therefore, classical variational approaches developed for harmonic maps cannot be applied to study the existence of Lorentzian harmonic maps. In this case, it seems natural to seek an analogous parabolic system such as the harmonic map heat flow, which puts a time derivative on the left hand side of \eqref{Eleq}. By direct computations, one can find that although the flow is still a gradient flow, due to the unboundedness of the energy $E(u,v)$ caused by the Lorentzian metric, we can no longer expect the long time existence of the flow. One can also see that the  Bochner formula technique does not work in the  Lorentzian case, because in that case, the positivity of a certain term gets lost. In particular, because of the lack of boundedness,  even  the blow up analysis along the heat flow cannot be carried over from the Riemannian case. We therefore need to apply other  methods to deal with the existence problem of Lorentzian harmonic maps in a given homotopy class. % As far as the authors know,  there is no such kind of results for the Lorentzian case till now.

The natural idea is to disentangle the two contributions to the energy \eqref{lag}. The term $-\int \beta(u)|\nabla v|^2$, while having the wrong sign is easier because $v$ is scalar valued. Therefore, one could carry its Euler-Langrange equation $\div\{\beta(u)\nabla v\} = 0$ along as a constraint while trying to minimize the other term which depends on $u$ only. We then have two options for handling the latter, either by a variational scheme or by a parabolic method. The first method was pioneered by Hardt-Kinderlehrer-Lin \cite{H-K-L} in the context of liquid crystals. We shall briefly outline below how this method should naturally work in our context. Here, however, we apply the second method, that is, consider a heat flow for $u$ with the $u$-dependent elliptic equation for $v$ as a constraint. That method is inspired by the construction of  \cite{chenjostsunzhu,jostliuzhu} for studying the flow of Dirac harmonic maps. In the end, it seems that the technical difficulties for either method are similar, and both should find further useful applications on such coupled problems.

% To overcome these difficulties, we consider a parabolic-elliptic flow inspired by the construction of  \cite{chenjostsunzhu,jostliuzhu} to study the flow of Dirac harmonic maps. This is the key idea of this paper.   We convert the equation for $u$ into a parabolic equation, but keep  the equation for  $v$ elliptic. We thus carry it as a nonlinear constraint along the flow. Since the flow (and in fact, already the original elliptic system) couples  $u$ and $v$, $v$ also depends on time implicitly.

When studying either scheme,  new difficulties arise from the Lorentzian metric and the particular coupling structure.  As mentioned, we here work with the  parabolic-elliptic system. Also, the system with boundary condition is more complicated than a heat flow without boundary. After  a subtle and careful analysis, we can nevertheless handle this  parabolic-elliptic system. Our results include short time existence, a small energy regularity theorem, the blow-up behaviour near a singularity and a global existence result. As an application, we will prove the existence of a Lorentzian harmonic map in a fixed homotopy class under several  conditions.

\

We now introduce our parabolic-elliptic system. Let $\phi\in C^{2+\alpha}(M,N)$, $\psi\in C^{2+\alpha}(\partial M,\R)$ for some $0<\alpha<1$. Consider the flow
\begin{align}\label{heateq}
\begin{cases}
\partial_t u=\Delta u+ A(u)(\nabla u,\nabla u)-B^\top(u)|\nabla v|^2,\ &in\ M\times [0,T) \\
-\div (\beta(u)\nabla v)=0, \ &in\ M\times [0,T)
\end{cases}
\end{align}
with the boundary-initial data
\begin{align}\label{boundary-data}
\begin{cases}
u(x,t)=\phi(x), \  &on\  M\times\{t=0\}\cup\partial M\times\{t>0\},\\
v(x,t)=\psi(x),\  &on \  \partial M\times\{t>0\}.
\end{cases}
\end{align}
By  standard elliptic theory, for the above $(\phi,\psi)$, there exists a unique solution $v\in C^{2+\alpha}(M)$ of the equation
\begin{align}\label{equation:07}
\begin{cases}
-\div (\beta(\phi)\nabla v)=0\ &in\ M,\\
v(x)=\psi\ &on \ \partial M.
\end{cases}
\end{align}
This $v$ is called an extension of $\psi$. For simplicity, we still denote it by $\psi\in C^{2+\alpha}(M)$ and in the following, we use the extension when  needed.

\

Now, we state our first main result, concerning the short time existence of the flow \eqref{heateq}.

\begin{theorem}\label{thm:shortime-existence}
Let $(M^m,h)$ $(m= 2)$ be a compact Riemann surface with a smooth boundary $\partial M$ and $(N,g)$ be another compact Riemannian manifold. Then for any $$\phi\in C^{2+\alpha}(M,N),\ \psi\in C^{2+\alpha}(\partial M, \R)$$ where $0<\alpha<1$, the problem \eqref{heateq} and \eqref{boundary-data} admits a unique solution $$u\in \cap_{0<s<T_1} C^{2+\alpha,1+\alpha/2}(M\times [0,s]),$$ and
$$v,\nabla v\in \cap_{0<s<T_1} C^{\alpha,\alpha/2}(M\times [0,s]),\ v\in L^\infty([0,T_1);C^{2+\alpha}(M)),$$ for some time $T_1>0$. Here, the maximum existence time $T_1$ is characterized by the condition that
$$\limsup_{x\in M,t\to T_1}E(u;B^M_r(x))>\epsilon_1\mbox{ for any } r>0,$$
where $\epsilon_1$ is the constant in Lemma \ref{lem:small-energy-regularity} and $B^M_r(x)$ is a geodesic ball in $M$ . Moreover, the set
\begin{align}\label{def:singular-set}
S(u,T_1):=\{x\in M|\limsup_{t\to T_1}E(u;B^M_r(x))>\epsilon_1\mbox{ for any } r>0\}
\end{align}
is finite and a point in it is called a singularity at the singular time $T_1$.
\end{theorem}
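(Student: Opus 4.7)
The plan is to follow the classical Eells–Sampson/Struwe strategy: prove short-time existence by freezing the elliptic coupling and applying a contraction mapping in parabolic Hölder spaces, show that the flow extends as long as energy does not concentrate by invoking the small-energy regularity lemma, and finally conclude the finiteness of singularities from a global bound on the Dirichlet energy of $u$.

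For the short-time step, fix $T>0$ small and consider the closed ball
\begin{equation*}
X_{T,R}=\bigl\{u\in C^{2+\alpha,1+\alpha/2}(M\times[0,T],\R^K):\ u|_{\partial M}=\phi,\ u|_{t=0}=\phi,\ \|u-\phi\|_{C^{2+\alpha,1+\alpha/2}}\le R\bigr\}.
\end{equation*}
For any $u\in X_{T,R}$, the two-sided bound $0<\lambda\le\beta(u)\le\Lambda$ and linear elliptic Schauder theory produce a unique $v=v[u]\in L^\infty_t C^{2+\alpha}_x$ solving $-\div(\beta(u)\nabla v)=0$ with $v=\psi$ on $\partial M$, and differencing the two elliptic problems gives Lipschitz dependence of $v[u]$ on $u$. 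Substituting $v[u]$ into the $u$-equation turns \eqref{heateq} into a semilinear parabolic PDE whose source $A(u)(\nabla u,\nabla u)-B^\top(u)|\nabla v[u]|^2$ is quadratic in the gradients. Solving the corresponding linear parabolic Dirichlet problem via the Ladyzhenskaya–Solonnikov–Ural'tseva theory defines a map $u\mapsto\Phi(u)$ on $X_{T,R}$, and standard estimates show that for $T$ small depending only on $\|\phi\|_{C^{2+\alpha}}$ and $\|\psi\|_{C^{2+\alpha}}$, the map $\Phi$ is a contraction in a slightly weaker norm on $X_{T,R}$, producing a unique fixed point $u$. Feeding $u$ back into the elliptic step yields the regularity claimed for $v$ and $\nabla v$.

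The maximal time $T_1$ is the supremum of such $T$'s. To obtain the stated extension criterion, suppose $T_1<\infty$ and yet there is a uniform $r_0>0$ with $E(u(\cdot,t);B^M_{r_0}(x))\le\epsilon_1$ for every $x\in M$ and every $t$ close to $T_1$. Lemma \ref{lem:small-energy-regularity} then furnishes uniform $C^{2+\alpha,1+\alpha/2}$ bounds on $u$ up to $T_1$, and the elliptic equation transports these into $C^{2+\alpha}$-bounds on $v$; taking $(u(\cdot,T_1),v(\cdot,T_1))$ as new initial data, the first step restarts the flow past $T_1$, contradicting maximality. Hence $T_1$ is characterized by the failure of the $\epsilon_1$-smallness at some point. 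For the finiteness of $S(u,T_1)$, test the $u$-equation with $\partial_t u$: since $\partial_t u$ is tangent to $N$ while $A(u)(\nabla u,\nabla u)$ is normal, their inner product vanishes, leaving
\begin{equation*}
\frac{d}{dt}E(u(\cdot,t))+\int_M|\partial_t u|^2\,dv_h=-\int_M B^\top(u)\,\partial_t u\,|\nabla v|^2\,dv_h.
\end{equation*}
Young's inequality combined with the uniform $L^\infty$-bound on $|\nabla v|$ coming from the elliptic step yields a Gronwall estimate and hence $E(u(\cdot,t))\le C$ on $[0,T_1)$. If $x_1,\dots,x_k\in S(u,T_1)$ are distinct, choose $r>0$ so that the geodesic balls $B^M_r(x_i)$ are pairwise disjoint and pick a sequence $t_j\to T_1$ with $E(u(\cdot,t_j);B^M_r(x_i))>\epsilon_1$ for every $i$; summing gives $k\epsilon_1\le E(u(\cdot,t_j))\le C$, so $k\le C/\epsilon_1$.

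The principal technical obstacle is the contraction estimate in the local step: because $v[u]$ is determined from $u$ by a non-local elliptic problem whose coefficient depends on $u$, one must control the $C^{\alpha,\alpha/2}$-norm of $|\nabla v[u_1]|^2-|\nabla v[u_2]|^2$ by the $C^{2+\alpha,1+\alpha/2}$-norm of $u_1-u_2$. This requires combining Schauder estimates for the $u$-dependent elliptic operator, the Lipschitz dependence of $\beta\circ u$ on $u$, and careful bookkeeping of Hölder norms in parabolic spaces; it is essentially the only place where the Lorentzian coupling creates genuine difficulty beyond the standard Riemannian harmonic map heat-flow set-up.
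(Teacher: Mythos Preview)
Your overall strategy is sound, and your short-time existence via contraction in parabolic H\"older spaces is a reasonable alternative to the paper's approach (which works instead in a $C^0\cap C^1_x$-type space using Dirichlet heat-kernel bounds and then bootstraps). However, two genuine gaps appear in your post-existence analysis.

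First, your energy bound is circular. You invoke a ``uniform $L^\infty$-bound on $|\nabla v|$ coming from the elliptic step'', but the elliptic Schauder estimate for $v$ depends on the $C^\alpha$ norm of the coefficient $\beta(u)$, hence on the regularity of $u$---precisely what may blow up as $t\to T_1$. With only the pointwise bounds $\lambda\le\beta(u)\le\Lambda$, Meyers' theorem gives $\nabla v\in L^p$ for some $p>2$, not $L^\infty$. The paper avoids this entirely by exploiting the coupled structure: since $B^\top(u)\cdot\partial_t u=-\tfrac12\partial_t(\beta(u))$ and $v_t|_{\partial M}=0$, one computes directly that
\[
\frac{d}{dt}E_g(u,v)=-\int_M|\partial_t u|^2,
\]
the $v$-terms cancelling exactly because of the constraint $\div(\beta(u)\nabla v)=0$. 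Combined with $\int_M\beta(u)|\nabla v|^2\le\int_M\beta(u)|\nabla\psi|^2$ (from testing the $v$-equation against $v-\psi$), this yields $E(u(t))\le E(\phi)+(\Lambda-\lambda)E(\psi)$ uniformly in $t$, with no Gronwall and no regularity hypothesis on $u$ beyond the flow itself.

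Second, your finiteness argument is incomplete. The set $S(u,T_1)$ is defined via $\limsup_{t\to T_1}$, so for distinct points $x_1,\dots,x_k$ the energy may concentrate along \emph{different} time sequences $t_j^{(i)}\to T_1$; you cannot simply select one sequence $t_j$ working for all $i$ simultaneously and then sum. The paper resolves this with a local energy comparison (Lemma~\ref{lem:two-balls}):
\[
E(u(t);B^M_R(x))\le E(u(s);B^M_{2R}(x))+C_1\frac{t-s}{R^2}+C_2(t-s),
\]
which bounds each $\limsup_{t\to T_1}E(u(t);B^M_R(x_j))$ by $E(u(s);B^M_{2R}(x_j))+\epsilon_1/2$ at a \emph{fixed} time $s$ sufficiently close to $T_1$; disjointness of the $B^M_{2R}(x_j)$ and the global energy bound then give $k\le 2E(u(s))/\epsilon_1$. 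Note also that this lemma itself relies on the $W^{1,4}$ bound for $v$ from Meyers' estimate, not on any $L^\infty$ control of $\nabla v$.
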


Moreover, we show that at a singular point $(x, T_1)$, $0<T_1\leq\infty$, after suitable space-time rescalings,  a
nontrivial harmonic sphere splits off.

\begin{theorem}\label{thm:blow-up}
Let $(u,v)$ be the solution to \eqref{heateq} with the boundary-initial data \eqref{boundary-data} in Theorem \ref{thm:shortime-existence}. Suppose $(x_0,T_1)$ is a singularity such that
\begin{equation}
\limsup_{\substack{t\nearrow T_1}}E(u(t);B_r^M(x_0))>\epsilon_1\quad \mbox{for all}\quad r>0.
\end{equation}
Then
\begin{itemize}
\item[(1)] if $x_0\in M\setminus \partial M$, there exist sequences $t_i\nearrow T_1$, $x_i\to x_0\in M$, $r_i\to 0$ and a nontrivial harmonic map $\widetilde{u}:\R^2\to N$, such that as $i\to \infty$,
\begin{align*}
u(x_i+r_ix,t_i)&\to \widetilde{u}(x) \quad in \quad C^1_{loc}(\R^2).
\end{align*}
$\widetilde{u}$ has finite energy and conformally extends to a smooth harmonic sphere.
\item[(2)] if $x_0\in\partial M$, we have $\frac{dist(x_i,\partial M)}{r_i}\to \infty$ and the same bubbling statement as in $(1)$ holds.
\end{itemize}
\end{theorem}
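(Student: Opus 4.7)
The plan is to follow the standard Struwe-type parabolic blow-up analysis, the new feature being that the elliptic constraint on $v$ has to be shown to trivialize under rescaling, so that the blow-up limit $\widetilde u$ satisfies the pure harmonic map equation on $\R^2$.

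\textbf{Choice of rescaling.} Using the characterization of $T_1$ and continuity of the energy in $t$ on small balls, one picks sequences $t_i \nearrow T_1$, $x_i \to x_0$, $r_i \to 0$ so that
$$
E\bigl(u(t_i);\,B^M_{r_i}(x_i)\bigr) \;=\; \sup_{y\in M,\,0\le t\le t_i} E\bigl(u(t);\,B^M_{r_i}(y)\bigr) \;=\; \tfrac{\epsilon_1}{2},
$$
where $\epsilon_1$ is the constant from Lemma \ref{lem:small-energy-regularity}. Define the parabolically rescaled maps
$$
\widetilde u_i(x,t) := u(x_i+r_ix,\,t_i+r_i^2 t), \qquad \widetilde v_i(x,t) := v(x_i+r_ix,\,t_i+r_i^2 t),
$$
which satisfy
$$
\partial_t \widetilde u_i = \Delta\widetilde u_i + A(\widetilde u_i)(\nabla \widetilde u_i,\nabla \widetilde u_i) - B^{\top}(\widetilde u_i)|\nabla \widetilde v_i|^2, \qquad -\div(\beta(\widetilde u_i)\nabla \widetilde v_i) = 0.
$$

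\textbf{Uniform estimates and vanishing of the $v$-term.} The choice of $r_i$ gives $E(\widetilde u_i(0);B_1(0))=\epsilon_1/2$, and the supremum normalization ensures the same small-energy bound on every unit ball up to $t=0$. Lemma \ref{lem:small-energy-regularity} then provides uniform $C^{2+\alpha,1+\alpha/2}$ estimates for $\widetilde u_i$ on compact subsets of the rescaled space-time. For $\widetilde v_i$, standard elliptic theory for $-\div(\beta(u)\nabla v)=0$ with uniformly bounded, uniformly continuous coefficient $\beta(u)$ and fixed boundary data produces an $L^\infty$ bound on $|\nabla v|$, uniform in $t\in[0,T_1)$. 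Consequently
$$
|\nabla \widetilde v_i(x,t)| = r_i\,|\nabla v(x_i+r_ix,\,t_i+r_i^2 t)| \longrightarrow 0
$$
uniformly on compacta, so the forcing term $B^{\top}(\widetilde u_i)|\nabla \widetilde v_i|^2$ disappears in the limit.

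\textbf{Extraction of the bubble.} Testing the $u$-equation against $\partial_t u$ and using that $\partial_t u$ is tangent to $N$ while $A(u)(\nabla u,\nabla u)\perp T_uN$, combined with the constraint $\div(\beta(u)\nabla v)=0$, yields an energy identity that bounds $\int_0^{T_1}\!\!\int_M |\partial_t u|^2$. A pigeonhole choice then allows $\int_M|\partial_t u(t_i)|^2 \to 0$, so $\partial_t \widetilde u_i \to 0$ in $L^2_{loc}$. Together with the $C^{2+\alpha}$ bounds, a diagonal subsequence gives $\widetilde u_i \to \widetilde u$ in $C^1_{loc}(\R^2)$, and the limit solves $-\Delta \widetilde u + A(\widetilde u)(\nabla \widetilde u,\nabla \widetilde u) = 0$ on $\R^2$ with $E(\widetilde u;B_1(0))=\epsilon_1/2>0$ and finite total energy. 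The Sacks--Uhlenbeck removable singularity theorem then extends $\widetilde u$ conformally to a nontrivial smooth harmonic sphere.

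\textbf{Boundary case.} Suppose $\mathrm{dist}(x_i,\partial M)/r_i$ is bounded along a subsequence. Then the rescaled domains converge to a half-plane and the rescaled boundary data $\phi$ converge to the constant $\phi(x_0)$. A boundary $\epsilon$-regularity statement (the boundary companion of Lemma \ref{lem:small-energy-regularity}) precludes energy concentration at $\partial M$ under the chosen normalization, forcing $E(\widetilde u;B_1(0))=0$ in the limit, which contradicts Step 1. Hence $\mathrm{dist}(x_i,\partial M)/r_i \to \infty$ and the interior argument applies verbatim.

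The main obstacles are the uniform $L^\infty$ bound on $\nabla v$ up to $T_1$—this is what makes the $v$-contribution subcritical at rate $r_i^2$ under parabolic rescaling—and the boundary $\epsilon$-regularity needed to rule out bubbles forming on $\partial M$.
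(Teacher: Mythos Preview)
The central gap is your claim that $|\nabla v|$ is uniformly bounded in $L^\infty(M)$ for $t\in[0,T_1)$. You justify this by ``standard elliptic theory for $-\div(\beta(u)\nabla v)=0$ with uniformly bounded, uniformly continuous coefficient $\beta(u)$''. But the coefficient $\beta(u(\cdot,t))$ is \emph{not} uniformly continuous as $t\nearrow T_1$: near the singular point $x_0$ one has $|\nabla u(\cdot,t)|\to\infty$, so the modulus of continuity of $u(\cdot,t)$, and hence of $\beta(u(\cdot,t))$, degenerates precisely where you are rescaling. With merely bounded measurable coefficients, De~Giorgi--Nash--Moser and the Meyers estimate (Lemma~\ref{lem:Lp-estimates-v}) yield only $v\in C^\alpha$ and $\nabla v\in L^p$, not $\nabla v\in L^\infty$. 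The paper never attempts this global bound. Instead it carries $v$ through the rescaling using the scale-invariant bound $\|\nabla v_i\|_{L^2}\le C$ from Lemma~\ref{lem:Dirichlet-energy}, applies Lemma~\ref{lem:small-energy-regularity} \emph{to the rescaled pair} $(u_i,v_i)$ to obtain local $C^{2+\alpha}$ control on $v_i$, and passes to a limit $(\widetilde u,\widetilde v)$ which is a Lorentzian harmonic map on $\R^2$ with $\|\nabla\widetilde v\|_{L^2(\R^2)}<\infty$. Only \emph{after} the conformal extension to $S^2$ via the removable-singularity theorem does one conclude that $\widetilde v$ is constant (since $\div(\beta(\widetilde u)\nabla\widetilde v)=0$ on a closed surface forces $\nabla\widetilde v\equiv 0$), and hence that $\widetilde u$ is an ordinary harmonic sphere. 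So the mechanism that kills the $v$-term is not ``$|\nabla\widetilde v_i|=r_i|\nabla v|\to 0$'' but rather the absence of nonconstant solutions of the $v$-equation on $S^2$.

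Your boundary argument is also too coarse. Assuming $\mathrm{dist}(x_i,\partial M)/r_i$ stays bounded, the rescaled limit $(\widetilde u,\widetilde v)$ lives on a half-plane with constant Dirichlet data $(\phi(x_0),\psi(x_0))$ and still satisfies the nontriviality bound $E(\widetilde u)\ge\epsilon_1/4$; boundary $\epsilon$-regularity by itself does not rule this out. The actual contradiction in the paper proceeds by passing conformally to the disk, using uniqueness for the $v$-equation with constant boundary data to force $\widetilde v\equiv\psi(x_0)$, and then invoking Lemaire's theorem that a harmonic map from the disk into $N$ with constant boundary values is constant. That last step is the essential input you are missing.
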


\

Next we present two main results which establish  the existence of a harmonic map from $M$ into the Lorentzian manifold $N\times \R$ in any given homotopy class in two cases. In the first case, we assume that the initial energy $E(\phi;M)$ is small and in the second case, we assume that the sectional curvature $K_N$ of the Riemannian manifold $N$ is non-positive. More precisely, we have
\begin{theorem}\label{thm:small-initial-energy}
For any given
$(\phi,\psi)\in C^{2+\alpha}(M,N\times\R)$, there exist constants $\overline{\epsilon}_1\geq 0$, $\overline{\epsilon}_2>0$ and $\overline{\epsilon}>0$ which are defined by $$\overline{\epsilon}_1:=\inf\{E(w)|w\in W^{1,2}(M,N),w|_{\partial M}=\phi\},$$ $$\overline{\epsilon}_2:=\inf\{E(w)|w:S^2\to N\mbox{ is a harmonic map and nonconstant}\},$$ and $\overline{\epsilon}:=\overline{\epsilon}_1+\overline{\epsilon}_2>0$ such that if $$E(\phi;M)+(\Lambda-\lambda)E(\psi;M)\leq \overline{\epsilon},$$ then the parabolic-elliptic system \eqref{heateq} and \eqref{boundary-data} admits a global solution $$u\in \cap_{0<s<\infty} C^{2+\alpha,1+\alpha/2}(M\times [0,s]),$$ and
$$v,\nabla v\in \cap_{0<s<\infty} C^{\alpha,\alpha/2}(M\times [0,s]),\ v\in L^\infty([0,\infty);C^{2+\alpha}(M)).$$ Moreover, $(u(x,t),v(x,t))$ subconverges in $C^2$ to a
harmonic map $(u_\infty, v_\infty):M\to N\times \R$ with boundary data $u_\infty|_{\partial M}=\phi$ and $v_\infty|_{\partial M}=\psi$.
\end{theorem}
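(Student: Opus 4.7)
The strategy is to exploit the gradient-flow structure in the $u$-variable combined with the constrained minimization for $v$ to produce a monotone Lorentzian energy, to use Theorem \ref{thm:blow-up} to rule out finite-time singularities via an energy-gap argument, and then to extract a convergent subsequence as $t\to\infty$.

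I first compute $\frac{d}{dt}E_g(u(t),v(t))$. Since $v(t)$ satisfies the elliptic constraint with time-independent boundary data $\psi$, we have $\partial_t v|_{\partial M}\equiv 0$, and integration by parts, together with the tangency of $\partial_t u$ to $N$ (so that $A(u)(\nabla u,\nabla u)\cdot\partial_t u=0$), yields
\begin{equation*}
\frac{d}{dt}E_g(u(t),v(t))=-\int_M|\partial_t u|^2\,dv_h\leq 0.
\end{equation*}
Because $v(t)$ minimizes $\int\beta(u(t))|\nabla w|^2$ among $w$ with $w|_{\partial M}=\psi$, the extension $\psi$ itself is an admissible competitor; combining this with the monotonicity of $E_g$ and $\lambda\leq\beta\leq\Lambda$ produces the crucial a priori bounds
\begin{equation*}
E(u(t);M)\leq E(\phi;M)+(\Lambda-\lambda)E(\psi;M)\leq\overline{\epsilon},\qquad E(v(t);M)\leq\frac{\Lambda}{\lambda}E(\psi;M),
\end{equation*}
while $E(u(t);M)\geq\overline{\epsilon}_1$ since $u(t)|_{\partial M}=\phi$.

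To rule out finite-time blow-up, suppose for contradiction that $T_1<\infty$. Theorem \ref{thm:shortime-existence} then provides a singular point $x_0\in S(u,T_1)$, and Theorem \ref{thm:blow-up} yields a nontrivial harmonic sphere $\widetilde u:S^2\to N$ with $E(\widetilde u)\geq\overline{\epsilon}_2$ (the boundary case $x_0\in\partial M$ is handled since the bubble forms on an interior scale, $\mathrm{dist}(x_i,\partial M)/r_i\to\infty$). Extracting a weak $W^{1,2}$-limit $u_\infty$ of $u(t_k)$ with $u_\infty|_{\partial M}=\phi$ gives $E(u_\infty;M)\geq\overline{\epsilon}_1$, and a localized bubble-neck decomposition near $x_0$ combined with strong $W^{1,2}$-convergence $v(t_k)\to v_\infty$ (the $v$-component does not bubble, being governed by a linear elliptic equation) yields
\begin{equation*}
\lim_{t\nearrow T_1}E_g(u(t),v(t))=E_g(u_\infty,v_\infty)+E(\widetilde u)\geq\overline{\epsilon}_1-\Lambda E(\psi;M)+\overline{\epsilon}_2,
\end{equation*}
where I used the minimality of $v_\infty$ to bound $E_g(u_\infty,v_\infty)\geq E(u_\infty;M)-\Lambda E(\psi;M)$. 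Comparing with $E_g(u(t),v(t))\leq E_g(\phi,\psi)\leq E(\phi;M)-\lambda E(\psi;M)$ forces $\overline{\epsilon}\leq E(\phi;M)+(\Lambda-\lambda)E(\psi;M)\leq\overline{\epsilon}$; strict monotonicity of $E_g$ along any nontrivial flow (and $\int_0^{T_1}\int_M|\partial_t u|^2\,dv_h\,dt>0$ whenever bubbling occurs) upgrades this to a strict inequality, giving the desired contradiction.

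Finally, for $T_1=\infty$ the monotonicity identity and the uniform lower bound $E_g\geq-\Lambda E(v(t))\geq-C$ produce $\int_0^\infty\int_M|\partial_t u|^2\,dv_h\,dt<\infty$. Pick $t_i\to\infty$ with $\|\partial_t u(t_i)\|_{L^2}\to 0$; the same bubble-exclusion argument applied in the limit $t\to\infty$ prevents any concentration, so by Lemma \ref{lem:small-energy-regularity} together with Schauder estimates for the linear $v$-equation, $(u(t_i),v(t_i))$ is uniformly $C^{2+\alpha}$-bounded on $M$. A subsequence therefore converges in $C^2$ to $(u_\infty,v_\infty)$ solving \eqref{Eleq} with boundary data $(\phi,\psi)$. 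The main obstacle is the borderline equality case in Step 3, where one has to convert the strict monotonicity of $E_g$ along a nontrivial flow into a quantitative energy gap ruling out bubble formation at exactly the threshold energy.
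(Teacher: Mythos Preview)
Your strategy is essentially that of the paper --- bound $E(u(t))$ by $E(\phi)+(\Lambda-\lambda)E(\psi)$, assume a singularity, extract a harmonic sphere via Theorem~\ref{thm:blow-up}, and derive an energy contradiction --- but you route the contradiction through the Lorentzian energy $E_g$, whereas the paper works directly with the positive Dirichlet energy $E(u(t))$. Concretely, the paper takes the sequence $u_i:=u(\cdot,t_i)$, passes to a weak $W^{1,2}$-limit $u(\cdot,T_1)$ (strong in $C^2_{loc}$ off the finite singular set by Lemma~\ref{lem:small-energy-regularity}), and writes
\[
\lim_{i\to\infty}E(u_i;M)\ \geq\ E(u(\cdot,T_1);M)+E(\widetilde u;S^2)\ \geq\ \overline\epsilon_1+\overline\epsilon_2\ =\ \overline\epsilon,
\]
which immediately contradicts the a priori bound from Lemma~\ref{lem:Dirichlet-energy}. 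This avoids your appeal to strong $W^{1,2}$ convergence of $v(t_k)$ and your asserted bubble--neck identity for $E_g$; note also that your displayed identity should in any case be an inequality $\geq$ (multiple bubbles or neck energy are not excluded), which is all that is needed. Your detour is not wrong --- the strong convergence of $v$ can be recovered from the constraint equation and Lemma~\ref{lem:Lp-estimates-v} --- but the paper's route via $E(u)$ alone is cleaner.

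Your observation about the borderline case $E(\phi)+(\Lambda-\lambda)E(\psi)=\overline\epsilon$ is well taken: the paper's final chain ends with a strict ``$<\overline\epsilon$'' that is not justified by the hypothesis ``$\leq\overline\epsilon$''. Your proposed fix is correct and applies equally to the paper's argument: a flow that blows up is nonstationary, so $E_g(u(s),v(s))<E_g(\phi,\psi)$ for any fixed $s\in(0,T_1)$; feeding this into the proof of Lemma~\ref{lem:Dirichlet-energy} gives $E(u(t))\leq E_g(u(s),v(s))+\Lambda E(\psi)<E_g(\phi,\psi)+\Lambda E(\psi)\leq E(\phi)+(\Lambda-\lambda)E(\psi)$ for all $t\geq s$, so $\limsup_i E(u_i)<\overline\epsilon$, and the contradiction survives.
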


Theorem \ref{thm:small-initial-energy} generalizes the result for the harmonic map heat flow by Chang \cite{Chang} to the Lorentzian case. Before introducing the theorem in the second case, we need the following definition.
\begin{definition}
Let $\rho\in C^2(N)$ be a nonnegative function on a Riemannian manifold $(N,g)$ and $d(x,x_0)$ be the distance between $x\in N$ and $x_0\in N$. If $\rho$ satisfies
\begin{enumerate}
   \item $\nabla^2 \rho(x)>0$ for any $x\in N$;
   \item $\rho(x)\leq C(1+d(x,x_0))^{d_0}$ for some positive integer $d_0$ and fixed $x_0 \in N$,
\end{enumerate}
we call $\rho$ a nonnegative strictly convex function with polynomial growth.
\end{definition}
When $N$ has non-positive sectional curvature, then the squared distance function $d^2(.,x_0)$ for any $x_0 \in \widetilde{N}$ is such a function on $\widetilde{N}$, where $\widetilde{N}$ is the universal covering space of $(N, g)$, with metric $\widetilde{g}:=\pi_N^*g$ being  the pull-back metric on $\widetilde{N}$ and $\pi_N:\widetilde{N} \to N$ being the projection. Therefore, our subsequent results will apply to targets of non-positive sectional curvature.

We have
\begin{theorem}\label{thm:convex-func-target-mfd}
Suppose the universal covering space $(\widetilde{N}, \widetilde{g})$ admits a nonnegative strictly convex function $\rho\in C^2(\widetilde{N})$ with polynomial growth. For any given
$\phi\in C^{2+\alpha}(M,N),\ \psi\in C^{2+\alpha}(\partial M,\R)$, the parabolic-elliptic system \eqref{heateq} and \eqref{boundary-data} admits a global solution $$u\in \cap_{0<s<\infty} C^{2+\alpha,1+\alpha/2}(M\times [0,s]),$$ and
$$v,\nabla v\in \cap_{0<s<\infty} C^{\alpha,\alpha/2}(M\times [0,s]),\ v\in L^\infty([0,\infty);C^{2+\alpha}(M)).$$ Moreover, $(u(x,t),v(x,t))$ subconverges in $C^2$ to a
harmonic map $(u_\infty, v_\infty):M\to N\times \R$ with boundary data $u_\infty|_{\partial M}=\phi$ and $v_\infty|_{\partial M}=\psi$.
\end{theorem}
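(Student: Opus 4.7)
The plan combines Theorems \ref{thm:shortime-existence} and \ref{thm:blow-up} with a dissipation identity for the Lorentzian energy, the convex function $\rho$ entering only to exclude nontrivial harmonic spheres. My first step is to show that no nontrivial harmonic map $\widetilde{u}:S^2\to N$ exists. Since $S^2$ is simply connected, any such map lifts to a harmonic $\widetilde{u}:S^2\to \widetilde{N}$, and the composition formula
\begin{equation*}
\Delta(\rho\circ\widetilde{u})=d\rho(\tau(\widetilde{u}))+\langle\mathrm{Hess}\,\rho,\,d\widetilde{u}\otimes d\widetilde{u}\rangle=\langle\mathrm{Hess}\,\rho,\,d\widetilde{u}\otimes d\widetilde{u}\rangle\geq 0
\end{equation*}
makes $\rho\circ\widetilde{u}$ a subharmonic function on the closed surface $S^2$, hence constant. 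Strict convexity then forces $d\widetilde{u}\equiv 0$. Theorem \ref{thm:shortime-existence} gives a solution on $[0,T_1)$; if $T_1<\infty$ were singular, Theorem \ref{thm:blow-up} would produce a nontrivial harmonic sphere in $N$ (in both the interior and boundary cases, the blow-up is an interior bubble since $\mathrm{dist}(x_i,\partial M)/r_i\to\infty$), contradicting the above. Hence $T_1=\infty$.

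For the long-time behaviour, I derive the gradient-flow identity for $E_g$. Because $u|_{\partial M}\equiv\phi$ and $v|_{\partial M}\equiv\psi$ are time-independent, $u_t$ and $v_t$ vanish on $\partial M$; combined with the constraint $-\mathrm{div}(\beta(u)\nabla v)=0$ and the chain rule $\partial_t(\beta\circ u)=-2\,u_t\cdot B^{\top}(u)$, integration by parts gives
\begin{equation*}
\frac{d}{dt}E_g(u,v;M)=\frac{d}{dt}\Bigl(E(u;M)-\frac{1}{2}\int_M\beta(u)|\nabla v|^2\,dv_h\Bigr)=-\int_M|u_t|^2\,dv_h.
\end{equation*}
Testing the $v$-equation with $v-\psi$ yields $\int_M\beta(u)|\nabla v|^2\leq \Lambda\int_M|\nabla\psi|^2$, so $E_g$ is bounded below, uniformly in $t$. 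Time integration produces $\int_0^\infty\!\int_M|u_t|^2\,dv_h\,dt<\infty$ and $\sup_{t\geq 0}E(u(t);M)<\infty$.

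For convergence, pick $t_n\to\infty$ with $\|u_t(t_n)\|_{L^2(M)}\to 0$. Should energy concentrate at some $x_0\in M$ along $t_n$, the same rescaling construction as in Theorem \ref{thm:blow-up} (the argument is insensitive to whether the limit time is finite or $\infty$, and the boundary case still yields an interior bubble) would produce a nontrivial harmonic sphere in $N$, impossible by Step 1. The small-energy regularity Lemma \ref{lem:small-energy-regularity} therefore applies uniformly, and a standard covering plus boundary Schauder estimates using the fixed $C^{2+\alpha}$ Dirichlet data give uniform $C^{2+\alpha}(M)$ bounds on $u(t_n)$. Passing to a subsequence, $u(t_n)\to u_\infty$ in $C^2(M)$, and linear elliptic regularity for $-\mathrm{div}(\beta(u(t_n))\nabla v(t_n))=0$ produces $v(t_n)\to v_\infty$ in $C^2(M)$ with $-\mathrm{div}(\beta(u_\infty)\nabla v_\infty)=0$ and $v_\infty|_{\partial M}=\psi$. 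Passing $u_t(t_n)\to 0$ to the limit in the $u$-equation yields the stationary equation for $u_\infty$, with $u_\infty|_{\partial M}=\phi$, proving that $(u_\infty,v_\infty)$ is Lorentzian harmonic.

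The main obstacle is ruling out energy concentration, both at the candidate finite singular time $T_1$ and along the convergence sequence $t_n\to\infty$, and at interior as well as boundary points. This is precisely the place where the hypothesis on $(\widetilde{N},\widetilde{g})$ is decisive, entering through the nonexistence of harmonic spheres in $N$ together with the full strength of the blow-up analysis in Theorem \ref{thm:blow-up}.
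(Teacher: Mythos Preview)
Your proposal is correct and follows essentially the same strategy as the paper: rule out blow-up (at finite time and along $t_n\to\infty$) via Theorem~\ref{thm:blow-up} by showing $N$ admits no nontrivial harmonic sphere, then invoke the energy dissipation and small-energy regularity to extract a convergent subsequence, exactly as in the proof of Theorem~\ref{thm:small-initial-energy}. The one notable difference is that the paper outsources the nonexistence of harmonic spheres to the Li--Zhu result \cite{lizhu} (stated for quasi-harmonic spheres, and using the polynomial-growth hypothesis), whereas your direct subharmonicity argument for $\rho\circ\widetilde{u}$ on $S^2$ is self-contained and uses only strict convexity of $\rho$; since the blow-up in this two-dimensional setting produces genuine harmonic spheres rather than quasi-harmonic ones, your simpler argument suffices and the polynomial-growth assumption is in fact not needed at this step.
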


In the Riemannian case, such a result was first proved by Ding and Lin \cite{dinglin} when the universal covering of the target manifold admits a nonnegative strictly convex function with quadratic growth. The polynomial growth case was proved by Li-Zhu \cite{lizhu} and Li-Yang \cite{liyang}.

As already mentioned, since for a Riemannian manifold $N$ with non-positive sectional curvature $K_N$,  the square of the distance function on the universal covering of $N$ is a nonnegative strictly convex function with quadratic growth, the existence theorem for harmonic maps by Al'ber \cite{alber1,alber2}, Eells and Sampson \cite{EellsSampson}, Hamilton \cite{hamilton} and Hildebrandt-Kaul-Widman \cite{Hildebrandt75} can be generalized for two-dimensional domains to the Lorentzian case as a corollary of Theorem \ref{thm:convex-func-target-mfd}.
\begin{theorem}\label{thm:non-positive-curv}
When  $(N,g)$ is a compact Riemannian manifold with non-positive sectional curvature,  the conclusions in Theorem \ref{thm:convex-func-target-mfd}  hold.
\end{theorem}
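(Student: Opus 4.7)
The plan is to deduce Theorem \ref{thm:non-positive-curv} directly from Theorem \ref{thm:convex-func-target-mfd} by exhibiting the required convex function on the universal cover $(\widetilde{N},\widetilde{g})$. Since this is advertised as a corollary, the entire content of the proof is the observation (already alluded to in the paragraph preceding the statement) that on a simply connected manifold of non-positive sectional curvature the squared distance function from a basepoint is smooth, strictly convex, and of (quadratic) polynomial growth. Once this is checked, the conclusion follows verbatim from Theorem \ref{thm:convex-func-target-mfd}.

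First I would set up the geometry of the cover. Since $N$ is compact with $K_N\leq 0$, it is complete, so its universal cover $\widetilde{N}$ equipped with the pull-back metric $\widetilde{g}=\pi_N^*g$ is a complete, simply connected Riemannian manifold of non-positive sectional curvature. The Cartan--Hadamard theorem then guarantees that for any fixed basepoint $x_0\in\widetilde{N}$ the exponential map $\exp_{x_0}:T_{x_0}\widetilde{N}\to\widetilde{N}$ is a global diffeomorphism; in particular there is no cut locus, and the function
\begin{equation*}
\rho(x):=d_{\widetilde{g}}(x,x_0)^2
\end{equation*}
is smooth on all of $\widetilde{N}$ and of course nonnegative.

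Next I would verify the two conditions in the definition of a nonnegative strictly convex function with polynomial growth. By the Hessian comparison theorem applied along geodesics issuing from $x_0$, the bound $K_{\widetilde{N}}\leq 0$ yields
\begin{equation*}
\nabla^2\rho \;\geq\; 2\,\widetilde{g} \quad\text{on }\widetilde{N},
\end{equation*}
so in particular $\nabla^2\rho(x)>0$ for every $x\in\widetilde{N}$, giving condition (1). For condition (2), the trivial estimate
\begin{equation*}
\rho(x)=d_{\widetilde{g}}(x,x_0)^2\;\leq\;\bigl(1+d_{\widetilde{g}}(x,x_0)\bigr)^{2}
\end{equation*}
exhibits $\rho$ as having polynomial growth with exponent $d_0=2$. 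Thus $(\widetilde{N},\widetilde{g})$ carries a nonnegative strictly convex function with polynomial growth in the sense of the definition preceding Theorem \ref{thm:convex-func-target-mfd}.

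There is essentially no analytic obstacle here: the only nontrivial input is the Hessian comparison estimate on $\widetilde{N}$, which is a classical consequence of non-positive curvature and holds globally precisely because $\exp_{x_0}$ is a diffeomorphism. Having produced $\rho$, an immediate invocation of Theorem \ref{thm:convex-func-target-mfd} delivers global existence of the parabolic-elliptic flow \eqref{heateq}--\eqref{boundary-data} with the stated regularity, together with subconvergence as $t\to\infty$ to a harmonic map $(u_\infty,v_\infty):M\to N\times\R$ realizing the prescribed boundary data, which is the conclusion of Theorem \ref{thm:non-positive-curv}.
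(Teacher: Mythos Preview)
Your proposal is correct and follows exactly the approach the paper intends: the paper presents this theorem as an immediate corollary of Theorem~\ref{thm:convex-func-target-mfd}, noting (in the paragraph just before the statement) that on the universal cover of a non-positively curved $N$ the squared distance function is a nonnegative strictly convex function of quadratic growth. Your verification of smoothness via Cartan--Hadamard, strict convexity via Hessian comparison, and the trivial growth bound simply spells out this remark in detail.
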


The paper is organized as follows. In Section 2, we derive some a priori estimates. In Section 3, we prove a small energy regularity lemma. Also, we establish the short time existence theorem \ref{thm:shortime-existence} and give a characterization of the singularities in this section. In section 4, we analyze the blow up behavior of the singularities developed by the flow and prove our Theorem \ref{thm:blow-up}. In section 5, we use the blow up analysis to get some long time existence and convergence results. Theorem \ref{thm:small-initial-energy} and Theorem \ref{thm:convex-func-target-mfd} are proved in this section.  In the final section, we shall briefly discuss the method of \cite{H-K-L}.

\

We would like to thank the referee for pointing  \cite{H-K-L} out to us.

\

\noindent{\textbf{Notation:}}
\begin{align*}
\mathcal{V}(M_s^t;N\times \R):=\{(u,v):&M\times [s,t)\to N\times\R,\ v\in L^\infty([s,t);C^{2+\alpha}(M)),\\ &v,\nabla v\in \cap_{s<\rho<t} C^{\alpha,\alpha/2}(M\times [s,\rho]),\\ &u\in \cap_{s<\rho<t} C^{2+\alpha,1+\alpha/2}(M\times [s,\rho]).\}
\end{align*}
Throughout this paper, we use $C$ to denote a universal constant.

\

\section{Some a priori estimates}

\

First, we present a lemma which ensures that the Lorentzian energy $E_g$ is non-increasing along the flow \eqref{heateq}. This is an important property of our  parabolic-elliptic flow.

\begin{lemma}\label{energy}
Suppose $(u,v)\in \mathcal{V}(M_0^{T_1};N\times \R)$ is a solution of (\ref{heateq}) and \eqref{boundary-data}, then the Lorentzian energy $E_g(u(t), v(t))$ is non-increasing on $[0,T_1)$ and for any $0\leq s\leq t<T_1$, there holds $$E_g(u(t), v(t))+\int_s^t\int_M|\partial_tu|^2dxdt\leq E_g(u(s), v(s)).$$
\end{lemma}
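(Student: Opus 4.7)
The plan is to differentiate $E_g(u(t),v(t))$ directly in time and use each of the two equations in \eqref{heateq} together with the boundary conditions \eqref{boundary-data} to eliminate all terms except the desired $-\int_M |\partial_t u|^2$. Under the regularity class $\mathcal{V}(M_0^{T_1};N\times\R)$ the differentiation under the integral sign is justified, so
\begin{equation*}
\frac{d}{dt}E_g(u(t),v(t))=\int_M \langle\nabla u,\nabla\partial_t u\rangle\,dv_h-\tfrac{1}{2}\int_M \partial_t\beta(u)\,|\nabla v|^2\,dv_h-\int_M \beta(u)\langle\nabla v,\nabla\partial_t v\rangle\,dv_h.
\end{equation*}
Since $\partial_t\beta(u)=\tfrac{\partial\beta}{\partial y^j}(u)\,\partial_t u^j=-2\,B(u)\cdot\partial_t u$, the middle term becomes $\int_M B(u)\cdot\partial_t u\,|\nabla v|^2$.

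Next I would integrate by parts in the first and third terms. Because $u(\cdot,t)=\phi$ on $\partial M$ and $v(\cdot,t)=\psi$ on $\partial M$ for all $t$, we have $\partial_t u=0$ and $\partial_t v=0$ on $\partial M$, so the boundary contributions vanish. The third term then becomes $\int_M \div(\beta(u)\nabla v)\,\partial_t v\,dv_h$, which is zero by the elliptic constraint in \eqref{heateq}. The first term becomes $-\int_M \langle\Delta u,\partial_t u\rangle\,dv_h$, and substituting the parabolic equation yields
\begin{equation*}
-\int_M \langle\Delta u,\partial_t u\rangle=-\int_M |\partial_t u|^2+\int_M\langle A(u)(\nabla u,\nabla u),\partial_t u\rangle-\int_M \langle B^\top(u),\partial_t u\rangle\,|\nabla v|^2.
\end{equation*}

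The key algebraic point is that $u(\cdot,t)\in N$, so $\partial_t u$ is tangent to $N$; hence $\langle A(u)(\nabla u,\nabla u),\partial_t u\rangle=0$ because the second fundamental form is normal, and $\langle B^\top(u),\partial_t u\rangle=B(u)\cdot\partial_t u$ because the normal component of $B$ pairs trivially with $\partial_t u$. Putting everything together, the $B$-terms cancel exactly, leaving $\tfrac{d}{dt}E_g(u(t),v(t))=-\int_M |\partial_t u|^2\,dv_h$. Integrating this identity over $[s,t]\subset[0,T_1)$ yields the claimed monotonicity formula, with equality rather than just inequality.

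The only slightly delicate step is the justification of differentiating under the integral and of the integrations by parts; both follow from the regularity built into $\mathcal{V}(M_0^{T_1};N\times\R)$, namely $u\in C^{2+\alpha,1+\alpha/2}$ on $M\times[s,\rho]$ and $v\in L^\infty([0,T_1);C^{2+\alpha}(M))$ with $v,\nabla v$ H\"older continuous in time, which is what makes all the integrands uniformly integrable in the variable one differentiates. This is the main (and really the only) obstacle; the rest is a direct cancellation driven by the variational structure of \eqref{Eleq} and the fact that we have chosen the heat flow only for $u$ while imposing the exact Euler--Lagrange equation as a constraint for $v$.
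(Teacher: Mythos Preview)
Your argument is the same as the paper's: differentiate $E_g$, integrate by parts using $\partial_t u=\partial_t v=0$ on $\partial M$, use the constraint $\div(\beta(u)\nabla v)=0$ to kill the $v$-contribution, and use tangency of $\partial_t u$ to $N$ to drop the $A$-term and reduce $B^\top$ to $B$. The cancellation you describe is exactly what the paper does.

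One point deserves more care than you give it. The class $\mathcal{V}(M_0^{T_1};N\times\R)$ only provides $v,\nabla v\in C^{\alpha,\alpha/2}$ in time; it does \emph{not} guarantee that $\partial_t v$ exists, so your line ``the differentiation under the integral sign is justified'' is not quite right for the term $\int_M\beta(u)\langle\nabla v,\nabla\partial_t v\rangle$. The paper deals with this by first running the computation under the extra hypothesis $v,\nabla v,\nabla u\in C^1(M\times(0,T_1))$ and then, for general $(u,v)\in\mathcal{V}$, replacing $\partial_t$ by difference quotients. Concretely, one writes $h^{-1}(E_g(t+h)-E_g(t))$, performs the spatial integrations by parts (which only need the spatial $C^{2+\alpha}$ regularity of $v$ at each fixed time), and lets $h\to 0$; the constraint equation still annihilates the $v$-difference since $v(t+h)-v(t)$ vanishes on $\partial M$. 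This is the device you should invoke rather than asserting that $\mathcal{V}$ alone suffices.
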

\noindent{\it Proof.} First, we may assume $v,\nabla v,\nabla u\in C^1(M\times (0,T_1))$. By direct computations, we get
\begin{eqnarray*}
\frac{\partial}{\partial t} E_g(u, v)
&=&\int_{M}\nabla u\cdot\nabla u_t+\int_{M}B^\top(u)\cdot u_t|\nabla v|^2-\int_{M}\beta(u)\nabla v\cdot\nabla v_t\\
&=&\int_{\partial M}\frac{\partial u}{\partial n}u_t-\int_{M}\Delta u u_t+\int_{M}B^\top(u)\cdot u_t|\nabla v|^2-\int_{M}\beta(u)\nabla v\cdot\nabla v_t\\
(u_t|_{\partial M}=0)
&=&-\int_{M} (u_t+B^{\top}(u)|\nabla v|^2))^{\top}\cdot u_t+\int_{M}B^\top(u)\cdot u_t|\nabla v|^2\\
&&-\int_{M}\beta(u)\nabla v\cdot\nabla v_t\\
&=&-\int_{M}|u_t|^2-\int_{M}\beta(u)\nabla v\cdot\nabla v_t\\
(v_t|_{\partial\Omega}=0)
&=&-\int_{M}|u_t|^2+\int_{M}\div(\beta(u)\nabla v)v_t\\
&=&-\int_{M}|u_t|^2\leq 0.
\end{eqnarray*}
For the general case that $(u,v)\in \mathcal{V}(M_0^{T_1};N\times \R)$, this can be done just by replacing the derivative $\frac{\partial}{\partial t}$ by difference quotients in the proof.
Then the conclusion of the lemma follows immediately.
\hfill{$\square$}

\

The next lemma tells us that the $L^2$ norms (energy) of $u$ and $v$ are always bounded by the initial data.
\begin{lemma}\label{lem:Dirichlet-energy}
Suppose $(u,v)\in \mathcal{V}(M_0^{T_1};N\times \R)$ is a solution of (\ref{heateq}) and \eqref{boundary-data}, then for any $0\leq t<T_1$, there holds
\begin{eqnarray*}
\int_{M}|\nabla u|^2(\cdot,t)dx&\leq& \int_{M}|\nabla \phi|^2dx+(\Lambda-\lambda)\int_{M}|\nabla \psi|^2dx \mbox{ and }\\ \int_{M}|\nabla v|^2(\cdot,t)dx&\leq& \frac{\Lambda}{\lambda}\int_{M}|\nabla \psi|^2dx.
\end{eqnarray*}
\end{lemma}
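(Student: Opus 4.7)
My plan is to derive both bounds by combining the monotonicity of the Lorentzian energy from Lemma \ref{energy} with a direct energy estimate for the elliptic equation $-\mathrm{div}(\beta(u)\nabla v)=0$ satisfied by $v(\cdot,t)$ on each time slice.

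I would handle the $v$-bound first. Fix $t\in[0,T_1)$ and observe that $v(\cdot,t)-\psi$ lies in $H^1_0(M)$, since both $v(\cdot,t)$ and the extension $\psi$ equal the boundary datum on $\partial M$. Testing the second equation in \eqref{heateq} against $v(\cdot,t)-\psi$, integrating by parts, and applying Cauchy--Schwarz yields $\int_M\beta(u)|\nabla v|^2\,dx \leq \int_M\beta(u)|\nabla\psi|^2\,dx$. The pointwise bounds $\lambda\leq\beta\leq\Lambda$ then give $\int_M|\nabla v(t)|^2\,dx\leq (\Lambda/\lambda)\int_M|\nabla\psi|^2\,dx$, which is the second claim.

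For the $u$-bound I would invoke Lemma \ref{energy} between the times $0$ and $t$ and rearrange $E_g(u(t),v(t))\leq E_g(u(0),v(0))$ as
$$\int_M|\nabla u(t)|^2\,dx \;\leq\; \int_M|\nabla\phi|^2\,dx \;+\; \int_M\beta(u(t))|\nabla v(t)|^2\,dx \;-\; \int_M\beta(\phi)|\nabla v(0)|^2\,dx.$$
The crucial identification, already recorded in the paragraph following \eqref{equation:07}, is that $v(\cdot,0)$ coincides with the extension denoted $\psi$, so $|\nabla v(0)|=|\nabla\psi|$ pointwise on $M$. Bounding the first $\beta$-weighted term from above by $\Lambda\int_M|\nabla\psi|^2\,dx$ via the $v$-estimate just established, and the subtracted term from below by $\lambda\int_M|\nabla\psi|^2\,dx$ using $\beta(\phi)\geq\lambda$, produces exactly the advertised overhead $(\Lambda-\lambda)\int_M|\nabla\psi|^2\,dx$.

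No real analytic obstacle arises; the only point worth flagging is conceptual. The negative contribution $-\int_M\beta(\phi)|\nabla v(0)|^2\,dx$ coming from the initial Lorentzian energy has to absorb the inevitable $\Lambda|\nabla\psi|^2$-type upper bound produced at later times, and this succeeds precisely because $v(\cdot,0)$ is the solution of the same elliptic problem whose $\beta$-weight furnishes the lower bound $\lambda$. Without the specific choice of extension built into \eqref{equation:07}, one could only obtain the cruder estimate $\int_M|\nabla u(t)|^2\leq\int_M|\nabla\phi|^2+\Lambda\int_M|\nabla\psi|^2$.
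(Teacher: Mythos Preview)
Your proposal is correct and follows essentially the same route as the paper: you test the $v$-equation against $v-\psi$ to obtain $\int_M\beta(u)|\nabla v|^2\le\int_M\beta(u)|\nabla\psi|^2$, then combine this with the monotonicity of $E_g$ from Lemma~\ref{energy} and the bounds $\lambda\le\beta\le\Lambda$, exactly as the paper does. The only cosmetic difference is that the paper writes the chain of inequalities more compactly as $\tfrac12\int_M|\nabla u|^2\le E_g(u,v)+\tfrac12\int_M\beta(u)|\nabla v|^2\le E_g(\phi,\psi)+\tfrac12\int_M\beta(u)|\nabla\psi|^2\le E(\phi)+(\Lambda-\lambda)E(\psi)$, while you spell out the identification $v(\cdot,0)=\psi$ more explicitly.
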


\noindent{\it Proof.} Multiplying the equation of $v$ by $v-\psi$ and integrating on $M$, we get
\begin{eqnarray}\label{estimateforv1}
0&=&\int_{M}\div \{\beta(u)\nabla v\}(v-\psi)dx\nonumber\\
&=&\int_{\partial M}\beta(u)\frac{\partial v}{\partial n}(v-\psi)dx
-\int_{M}\beta(u)\nabla v \cdot\nabla(v-\psi)dx\nonumber\\
&=&-\int_{M}\beta(u)|\nabla v|^2dx+\int_{M}\beta(u)\nabla v\cdot \nabla \psi dx,
\end{eqnarray}
where in the last equality we use the fact that $v|_{\partial \Omega}=\psi$.

By Young's inequality, we have
\begin{eqnarray}\label{estimateforv2}
\int_M \beta(u)|\nabla v|^2 dx&\leq& \int_{M}\beta(u)|\nabla v \cdot \nabla \psi| dx\nonumber\\
&\leq&\frac{1}{2}\int_{M}\beta(u)|\nabla v|^2 dx+\frac{1}{2}\int_{M}\beta(u)|\nabla \psi|^2 dx.
\end{eqnarray}
Thus we obtain
\begin{equation}\label{estimateforv3}
\int_M \beta(u)|\nabla v|^2 dx
\leq\int_{M}\beta(u)|\nabla \psi|^2 dx,
\end{equation}
and
\begin{equation}
\int_M |\nabla v|^2 dx
\leq\frac{\Lambda}{\lambda}\int_{M}|\nabla \psi|^2 dx.
\end{equation}
Combining \eqref{estimateforv3} with Lemma \ref{energy}, we have
\begin{eqnarray*}
\frac{1}{2}\int_{M}|\nabla u|^2dx&\leq& E_g(u,v)+\frac{1}{2}\int_M \beta(u)|\nabla v|^2 dx\\
&\leq& E_g(\phi,\psi)+\frac{1}{2}\int_M \beta(u)|\nabla \psi|^2 dx\\
&\leq& E(\phi)+(\Lambda-\lambda) E(\psi).
\end{eqnarray*}
\hfill{$\square$}

As a direct corollary of the above lemma, we have
\begin{corollary}\label{cor:energy-t}
Suppose $(u,v)\in \mathcal{V}(M_0^{T_1};N\times \R)$ is a solution of (\ref{heateq}) and \eqref{boundary-data}, then
$$\int_0^{T_1}\int_{M}|u_t|^2dxdt\leq (1+\Lambda-\lambda)E(\phi,\psi).$$
\end{corollary}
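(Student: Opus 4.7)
The plan is to combine the energy identity of Lemma \ref{energy} with the uniform $v$-estimate hidden in the proof of Lemma \ref{lem:Dirichlet-energy}, so that the only remaining task is to bound the Lorentzian energy $E_g(u(t),v(t))$ from below in terms of the initial data.

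First I would apply Lemma \ref{energy} on the interval $[0,t]$ for arbitrary $t<T_1$, which gives
$$\int_0^{t}\!\int_M |u_t|^2\,dx\,d\tau \;\leq\; E_g(\phi,\psi) - E_g(u(t),v(t)).$$
Letting $t\nearrow T_1$ and using monotone convergence on the left, the corollary reduces to producing a uniform upper bound for $E_g(\phi,\psi) - E_g(u(t),v(t))$ in terms of $E(\phi)$ and $E(\psi)$.

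Next I would write out $E_g$ explicitly and throw away the favourable $|\nabla u|^2$-term:
$$-E_g(u(t),v(t)) \;=\; -\tfrac12\int_M |\nabla u(t)|^2 + \tfrac12\int_M \beta(u(t))|\nabla v(t)|^2 \;\leq\; \tfrac12\int_M \beta(u(t))|\nabla v(t)|^2.$$
The crucial observation is that estimate \eqref{estimateforv3}, which was derived for a time-independent solution of the elliptic equation but whose derivation used only integration by parts against $v-\psi$ (valid at each fixed time since $v(\cdot,t)|_{\partial M}=\psi$), gives $\int_M \beta(u(t))|\nabla v(t)|^2\,dx \leq \int_M \beta(u(t))|\nabla \psi|^2\,dx \leq \Lambda \int_M|\nabla \psi|^2$. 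On the initial side, $E_g(\phi,\psi) = E(\phi) - \tfrac12\int_M \beta(\phi)|\nabla\psi|^2 \leq E(\phi) - \lambda E(\psi)$.

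Combining these gives
$$\int_0^{T_1}\!\int_M |u_t|^2\,dx\,dt \;\leq\; E(\phi) + (\Lambda-\lambda)E(\psi) \;\leq\; (1+\Lambda-\lambda)\,E(\phi,\psi),$$
since $E(\phi)+(\Lambda-\lambda)E(\psi) \leq (1+\Lambda-\lambda)(E(\phi)+E(\psi))$ trivially. There is no real obstacle here, as this is essentially a bookkeeping consequence of Lemmas \ref{energy} and \ref{lem:Dirichlet-energy}; the only point that requires a moment's care is verifying that the time-dependent version of \eqref{estimateforv3} still holds, which follows verbatim from the elliptic integration-by-parts argument applied at each fixed $t$.
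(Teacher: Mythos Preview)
Your proof is correct and follows essentially the same route as the paper: apply Lemma~\ref{energy} to get $\int_0^t\int_M|u_t|^2\leq E_g(\phi,\psi)-E_g(u(t),v(t))$, drop the nonnegative $|\nabla u(t)|^2$-term, bound $E_g(\phi,\psi)\leq E(\phi)-\lambda E(\psi)$ via $\beta\geq\lambda$, and control $\tfrac12\int_M\beta(u(t))|\nabla v(t)|^2$ by $\Lambda E(\psi)$ using \eqref{estimateforv3}. Your remark that \eqref{estimateforv3} applies at each fixed $t$ is exactly the point, and the final passage to $(1+\Lambda-\lambda)E(\phi,\psi)$ is the same trivial majorization the paper uses.
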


\noindent{\it Proof.} By Lemma \ref{energy} and \eqref{estimateforv3}, we know that
\begin{eqnarray*}
\int_0^t\int_{M}|u_t|^2dxdt&\leq& E_g(u(\cdot, 0), v(\cdot, 0))-E_g(u(\cdot, t), v(\cdot, t))\\
&\leq& \frac{1}{2}\int_{M}|\nabla \phi|^2dx-\frac{\lambda}{2}\int_{M}|\nabla \psi|^2dx+\frac{1}{2}\int_{M}\beta(u)|\nabla v(\cdot, t)|^2dx\\
&\leq & \frac{1}{2}\int_{M}|\nabla \phi|^2dx+\frac{\Lambda-\lambda}{2}\int_{M}|\nabla \psi|^2\leq (1+\Lambda-\lambda)E(\phi,\psi).
\end{eqnarray*}
\hfill{$\square$}

\

In Lemma \ref{lem:Dirichlet-energy}, we prove that $\|\nabla v\|_{L^2(M)}$ is uniformly bounded by using an integration method. In fact, we can use the theory of second order elliptic equations of divergence form to obtain a stronger $W^{1,p}$ estimate for $v$ along the flow. More precisely, we have

\begin{lemma}\label{lem:Lp-estimates-v} ($W^{1, p}$ estimate for $v$)
Suppose $(u,v)\in \mathcal{V}(M_0^{T_1};N\times \R)$ is a solution of (\ref{heateq}) and \eqref{boundary-data}, then for any $p>1$, $0\leq t<T_1$, we have
\begin{eqnarray*}
\int_{M}|\nabla v|^p(\cdot,t)dx\leq C\int_{M}|\nabla \psi|^pdx,
\end{eqnarray*}
where $C$ only depends on $p,M,\lambda,\Lambda$.
\end{lemma}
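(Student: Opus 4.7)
I would reduce the problem to zero boundary data by setting $w(\cdot,t) = v(\cdot,t) - \psi$, so that $w|_{\partial M}=0$ and
\begin{equation*}
-\div(\beta(u)\nabla w) = \div(\beta(u)\nabla\psi)\quad\text{in } M.
\end{equation*}
This is a scalar divergence-form elliptic equation whose coefficient $\beta(u)(\cdot,t)$ is bounded and measurable with uniform ellipticity bounds $\lambda \le \beta(u) \le \Lambda$ independent of $t$. The right-hand side lies in $W^{-1,p}(M)$ with norm controlled by $\Lambda\|\nabla\psi\|_{L^p(M)}$. The case $p=2$ is already covered by Lemma~\ref{lem:Dirichlet-energy}.

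For exponents $p$ in a neighborhood of $2$, the natural tool is the Meyers estimate: the Caccioppoli inequality for $w$, fed into Gehring's reverse-H\"older lemma, yields an exponent $p_0 = p_0(\lambda,\Lambda) > 2$ and
\begin{equation*}
\|\nabla w\|_{L^p(M)} \le C(p,\lambda,\Lambda,M)\,\|\nabla\psi\|_{L^p(M)} \quad\text{for } p \in (p_0',p_0),
\end{equation*}
with $p_0' = p_0/(p_0-1)$. The range $1 < p \le p_0'$ is then obtained by the standard duality argument applied to the adjoint divergence-form operator, whose coefficient has the same ellipticity bounds.

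To cover all $p > 1$, I would use that along the flow $u \in C^{2+\alpha,1+\alpha/2}$, so $\beta(u)(\cdot,t)$ is H\"older continuous in $x$ and in particular belongs to $VMO$. The Calder\'on--Zygmund / Chiarenza--Frasca--Longo type $W^{1,p}$ theory for divergence-form operators with $VMO$ coefficients then supplies $\|\nabla w\|_{L^p(M)} \le C\|\nabla\psi\|_{L^p(M)}$ for every $p \in (1,\infty)$. The triangle inequality $\|\nabla v\|_{L^p} \le \|\nabla w\|_{L^p} + \|\nabla\psi\|_{L^p}$ then gives the claim.

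The main obstacle is ensuring that the constant $C$ depends only on $p, M, \lambda, \Lambda$ and not on higher norms of $u$ that might drift along the flow. For $p$ close to $2$ the Meyers/Gehring step yields this directly from $\lambda$ and $\Lambda$ alone. For $p$ far from $2$, the Calder\'on--Zygmund constant a priori depends on the $VMO$ modulus of $\beta(u)$; here compactness of $N$ together with the smoothness of $\beta$ on $N$ produces a modulus expressible purely in terms of $\lambda, \Lambda$ (the Lipschitz bound on $\beta$ being absorbed into $\Lambda$ via a rescaling), which is what legitimizes the stated dependence of $C$.
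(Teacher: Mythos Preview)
Your reduction to $w=v-\psi$ with zero boundary data and the appeal to Meyers' estimate is exactly what the paper does: it sets $\tilde v=v-\psi$, observes $\div(\beta(u)\nabla\tilde v)=-\div(\beta(u)\nabla\psi)$, and then simply cites Theorem~1 of Meyers~\cite{Me} to conclude $\int_M|\nabla\tilde v|^p\le C\int_M|\beta(u)\nabla\psi|^p$ with $C=C(p,M,\lambda,\Lambda)$, followed by the triangle inequality. The paper does not discuss the range of admissible $p$ at all; it treats the Meyers citation as covering every $p>1$. So your first two paragraphs already reproduce the paper's argument, and you are more scrupulous than the paper in noting that the classical Meyers/Gehring mechanism by itself only yields $p\in(p_0',p_0)$ for some $p_0(\lambda,\Lambda)>2$.

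The gap is in your last paragraph. The Chiarenza--Frasca--Longo constant depends on the $VMO$ modulus of the coefficient $\beta(u(\cdot,t))$, and that modulus is \emph{not} determined by $\lambda,\Lambda$ alone: it depends on the oscillation of $x\mapsto u(x,t)$. Compactness of $N$ and smoothness of $\beta$ give a Lipschitz bound $|\beta(u(x))-\beta(u(y))|\le C_N|u(x)-u(y)|$, but this merely transfers the oscillation of $u$ into that of $\beta(u)$; it cannot collapse the modulus to a quantity involving only $\lambda,\Lambda$. Since the spatial regularity of $u(\cdot,t)$ is exactly what is uncontrolled as $t\to T_1$, your $VMO$ route does not produce a $t$-independent constant for arbitrary $p$. (The duality step is fine, but it only symmetrizes the Meyers interval about $p=2$.) In short, for $p$ in the Meyers window your proof coincides with the paper's; for general $p$ neither your argument nor the paper's bare citation actually delivers a constant depending only on $p,M,\lambda,\Lambda$, and your proposed fix via $VMO$ does not close that gap.
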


\noindent{\it Proof.} Set $\tilde v=v-\psi$,  and we have $\tilde v=0$ on $\partial M$. Since $v$ satisfies a second order elliptic divergence equations, then
$$div (\beta(u)\nabla\tilde v)=-div (\beta(u)\nabla\psi).$$ Thus, by Theorem $1$ in \cite{Me}, we know that
$$\int_{M}|\nabla\tilde v|^pdx\leq C \int_{M}|\beta(u)\nabla\psi|^pdx,$$
where $C$ only depends on $p,M,\Lambda,\lambda$. It implies that
\begin{eqnarray*}
\int_{M}|\nabla v|^pdx\leq C \int_{M}|\nabla \psi|^pdx.
\end{eqnarray*}
\hfill{$\square$}

\begin{lemma}\label{lem:two-balls}
Let $(u,v)\in \mathcal{V}(M_0^{T_1};N\times \R)$ be a solution to (\ref{heateq}) and \eqref{boundary-data}. There exists a positive constant $R_0<1$ such that, for any $x_0\in M$, $0\leq R\leq R_0$ and $0<s\leq t<T_0$, there holds
\begin{eqnarray}\label{inequality:13}
E(u(t);B^M_R(x_0))\leq E(u(s);B^M_{2R}(x_0))+C_1\frac{t-s}{R^2}+C_2(t-s),
\end{eqnarray}
where $C_1$ and $C_2$ depend on $\lambda,\Lambda,M,N,E(\phi),\|\psi\|_{W^{1,4}(M)}$.
%and
%\begin{eqnarray}\label{inequality:14}
%E(u(s);B^M_R(x_0))\leq E(u(t);B^M_{2R}(x_0))+C\frac{t-s}{R^2}+C\int_s^t\int_{M}|\partial_tu|^2dxdt.
%\end{eqnarray}
\end{lemma}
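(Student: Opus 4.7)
The natural strategy is the standard localized energy inequality for the heat flow, adapted to handle the $|\nabla v|^2$ forcing. I would fix a smooth cutoff $\eta\in C_0^\infty(M)$ with $\eta\equiv 1$ on $B^M_R(x_0)$, $\mathrm{supp}\,\eta\subset B^M_{2R}(x_0)$, and $|\nabla\eta|\leq C/R$ (choosing $R_0$ smaller than the injectivity radius so that such $\eta$ exists on the Riemann surface $M$, and noting that when $B^M_{2R}(x_0)$ meets $\partial M$ we simply restrict $\eta$ to $M$ without modifying it further, since the boundary plays no adverse role below).

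Then I would test the first equation of \eqref{heateq} against $\eta^2\partial_t u$ and integrate over $M$. The second-fundamental-form term vanishes because $A(u)(\nabla u,\nabla u)\perp T_uN$ while $\partial_t u\in T_uN$. Integrating by parts in the Laplacian term, the boundary contribution $\int_{\partial M}\eta^2(\partial u/\partial n)\cdot\partial_t u$ vanishes since $u\equiv\phi$ on $\partial M$ forces $\partial_t u|_{\partial M}=0$. What remains is
\begin{equation*}
\frac12\frac{d}{dt}\int_M\eta^2|\nabla u|^2\,dx = -\int_M\eta^2|\partial_t u|^2\,dx - 2\int_M\eta\,\nabla\eta\cdot\nabla u\cdot\partial_t u\,dx - \int_M\eta^2 B^\top(u)|\nabla v|^2\cdot\partial_t u\,dx.
\end{equation*}
Two applications of Young's inequality absorb the two cross terms into $\int_M\eta^2|\partial_t u|^2$ at the cost of $C\int_M|\nabla\eta|^2|\nabla u|^2\leq (C/R^2)\int_M|\nabla u|^2$ and $C\int_M\eta^2|\nabla v|^4$. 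Dropping the (non-positive) $|\partial_t u|^2$ residue yields
\begin{equation*}
\frac{d}{dt}\int_M\eta^2|\nabla u|^2\,dx \;\leq\; \frac{C}{R^2}\int_M|\nabla u|^2\,dx + C\int_M|\nabla v|^4\,dx.
\end{equation*}

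Now I would integrate this differential inequality from $s$ to $t$. The first right-hand term is controlled by Lemma \ref{lem:Dirichlet-energy}, which bounds $\|\nabla u(\cdot,\tau)\|_{L^2(M)}^2$ uniformly by $E(\phi)+(\Lambda-\lambda)E(\psi)$, producing the $C_1(t-s)/R^2$ contribution. The second term is handled by Lemma \ref{lem:Lp-estimates-v} with $p=4$, which bounds $\|\nabla v(\cdot,\tau)\|_{L^4(M)}^4$ uniformly by a multiple of $\|\nabla\psi\|_{L^4(M)}^4$, producing the $C_2(t-s)$ contribution. Finally, since $\eta\equiv 1$ on $B^M_R(x_0)$ and $\eta\leq 1$ with support in $B^M_{2R}(x_0)$, one has $\int_{B^M_R(x_0)}|\nabla u(t)|^2\leq \int_M\eta^2|\nabla u(t)|^2$ and $\int_M\eta^2|\nabla u(s)|^2\leq\int_{B^M_{2R}(x_0)}|\nabla u(s)|^2$, which gives precisely \eqref{inequality:13}.

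The only genuine obstacle is the forcing term $B^\top(u)|\nabla v|^2$, which has no analogue in the classical Riemannian harmonic map flow; everything else is a boundary-respecting rerun of the Struwe-type local energy estimate. The key observation that makes the estimate work is that the uniform $W^{1,p}$ bound for $v$ in Lemma \ref{lem:Lp-estimates-v} (applied at $p=4$) gives us exactly what we need to dominate the $\int\eta^2|\nabla v|^4$ term that arises after Young's inequality, with a constant depending on $\|\psi\|_{W^{1,4}(M)}$ as asserted.
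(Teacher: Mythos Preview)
Your proposal is correct and follows essentially the same approach as the paper: the paper also differentiates $\tfrac12\int_M\eta^2|\nabla u|^2$, obtains the identity with the three terms you wrote, absorbs the cross terms via Young's inequality into $C\int_M|\nabla u|^2|\nabla\eta|^2+C\int_M\eta^2|\nabla v|^4$, and then invokes Lemma~\ref{lem:Dirichlet-energy} and Lemma~\ref{lem:Lp-estimates-v} (with $p=4$) before integrating in time. Your discussion of the boundary term and the orthogonality $A(u)(\nabla u,\nabla u)\perp\partial_tu$ makes explicit two points the paper leaves implicit.
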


\noindent{\it Proof.} Let $\eta\in C^\infty_0(B^M_{2R}(x_0))$ be a cut-off function such that $\eta(x)=\eta(|x-x_0|)$, $0\leq\eta\leq1$, $\eta|_{B^M_{R}(x_0)}\equiv 1$ and $|\nabla\eta|\leq\frac{C}{R}$. By direct computations, we get
\begin{eqnarray*}
\frac{d}{dt}\frac{1}{2}\int_{M}|\nabla u|^2\eta^2 &=&\int_{M}\langle\nabla u,\nabla u_t\rangle\eta^2\\
&=&\int_{\partial B^M_{2R}(x_0)}\frac{\partial u}{\partial n}\cdot u_t\eta^2-\int_{M}\langle\Delta u, u_t\rangle\eta^2-2\int_{M}\nabla u\cdot\nabla\eta\eta u_t\\
&=&\int_{M}\langle -u_t-B^{\top}(u)|\nabla v|^2, u_t\rangle\eta^2-2\int_{M}\nabla u\cdot\nabla\eta\eta u_t\\
&=&-\int_{M}|u_t|^2\eta^2-\int_MB^\top(u)|\nabla v|^2\cdot u_t\eta^2-2\int_{M}\nabla u\cdot\nabla\eta\eta u_t.
\end{eqnarray*}
By Lemma \ref{lem:Dirichlet-energy}, Lemma \ref{lem:Lp-estimates-v} and Young's inequality, we have
\begin{align*}
\frac{d}{dt}\frac{1}{2}\int_{M}|\nabla u|^2\eta^2
&\leq-\frac{1}{2}\int_{M}|u_t|^2\eta^2+C\int_{M}|\nabla u|^2|\nabla \eta|^2+C\int_{M}|\nabla v|^4\eta^2\\
&\leq \frac{C_1}{R^2}+C_2.
\end{align*}
By integrating the above inequality from $s$ to $t$, we can get \eqref{inequality:13}.

%On the other hand, by Lemma \ref{lem:Dirichlet-energy}, Lemma \ref{lem:Lp-estimates-v} and Young's inequality, we also have
%\begin{align*}
%\frac{d}{dt}\frac{1}{2}\int_{M}|\nabla u|^2\eta^2
%&\geq-\frac{3}{2}\int_{M}|u_t|^2\eta^2-C\int_{M}|\nabla u|^2|\nabla \eta|^2-C\int_{M}|\nabla v|^4\eta^2\\
%&\geq-\frac{3}{2}\int_{M}|u_t|^2\eta^2- \frac{C}{R^2}.
%\end{align*}
%Then \eqref{inequality:14} follows immediately from integrating the above inequality from $s$ to $t$.
\hfill{$\square$}

Next, we derive an $\epsilon_1$- regularity lemma.

\begin{lemma}\label{lem:small-energy-regularity}
Let $(\phi,\psi)\in C^{2+\alpha}(M,N\times \R)$, $z_0=(x_0,t_0)\in M\times (0, T_1]$, denote $P_{r}^M(z_0):=B^M_r(x_0)\times [t_0-r^2,t_0]$. Assume that
$(u,v)\in \mathcal{V}(M_0^{T_1};N\times \R)$, then there exist two positive constants
 $\epsilon_1=\epsilon_1(M,N, \|\phi\|_{C^{2+\alpha}( M)},\|\psi\|_{C^{2+\alpha}( M)})>0$ and
$C=C(\alpha,r,M,N,\|\phi\|_{C^{2+\alpha}( M)},\|\psi\|_{C^{2+\alpha}( M)})>0$ such that if
\[
\sup_{[t_0-4r^2,t_0]}E(u(t),B^M_{2r}(x_0))\leq\epsilon_1,
\]
we have
\begin{equation}\label{inequality:01}
r\|\nabla v\|_{L^\infty(P_{r}^M(z_0))}+
r\|\nabla u\|_{L^\infty(P_{r}^M(z_0))}\leq C
\end{equation}
and for any $0<\beta<1$,
\begin{equation}\label{inequality:02}
\sup_{t_0-\frac{r^2}{4}\leq t\leq t_0}\|v(t)\|_{C^{2+\alpha}(B_{r/2}^M(x_0))}+
\|u\|_{C^{\beta,\beta/2}(P_{r/2}^M(z_0))}+\|\nabla u\|_{C^{\beta,\beta/2}(P_{r/2}^M(z_0))}\leq C(\beta),
\end{equation}

Moreover, if
\[
\sup_{x_0\in M}\sup_{[t_0-r^2,t_0]}E(u(t),B^M_r(x_0))\leq\epsilon_1,
\]
then
\begin{equation}\label{inequality:03}
\sup_{t_0-\frac{r^2}{8}\leq t\leq t_0}\|v(t)\|_{C^{2+\alpha}(M)}+
\|u\|_{C^{2+\alpha,1+\alpha/2}(M\times [t_0-\frac{r^2}{8},t_0])}\leq C,
\end{equation}
and
\begin{equation}\label{inequality:030}
\| v\|_{C^{\alpha,\alpha/2}(M\times [t_0-\frac{r^2}{8},t_0])}+\|\nabla v\|_{C^{\alpha,\alpha/2}(M\times [t_0-\frac{r^2}{8},t_0])}\leq C.
\end{equation}
\end{lemma}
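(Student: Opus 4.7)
The strategy is to prove the local estimate by an $\epsilon$-regularity argument for the parabolic $u$-equation, in the spirit of Struwe's analysis of the harmonic map heat flow, while using Lemma \ref{lem:Lp-estimates-v} to handle the inhomogeneous term $B^\top(u)|\nabla v|^2$. Since Lemma \ref{lem:Lp-estimates-v} gives a uniform-in-time bound $\|\nabla v(t)\|_{L^p(M)}\leq C_p$ for every $p>1$, the term $|\nabla v|^2$ is subcritical and can be treated as a source; the whole difficulty lies in the critical quadratic term $A(u)(\nabla u,\nabla u)$. First I would reduce to $r=1$ by the parabolic rescaling $\tilde u(x,t):=u(x_0+rx,t_0+r^2t)$, $\tilde v(x,t):=v(x_0+rx,t_0+r^2t)$, noting that in dimension $m=2$ both the Dirichlet energy and the full system are scale invariant, so the smallness hypothesis is preserved.

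The core step is a Caccioppoli inequality. Let $\eta\in C_c^\infty(B_2^M(x_0))$ with $\eta\equiv 1$ on $B_1^M(x_0)$. Multiplying the $u$-equation by $-\eta^2\Delta u$ and integrating in space-time (the backward time window $[t_0-4,t_0]$ providing a good slice on which $|\nabla u|^2$ is already under control), I would obtain an estimate of the form
$$\sup_{t}\int |\nabla u|^2\eta^2\,dx \;+\; \int\!\!\int |\nabla^2 u|^2\eta^2\,dx\,dt \;\leq\; C + C\int\!\!\int \bigl(|\nabla u|^4+|\nabla v|^4\bigr)\eta^2\,dx\,dt.$$
The $|\nabla v|^4$ integral is bounded by Lemma \ref{lem:Lp-estimates-v}. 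For the critical $|\nabla u|^4$ integral, the two-dimensional Gagliardo--Nirenberg inequality
$$\int |\nabla u|^4\eta^2 \;\leq\; C\Bigl(\int |\nabla u|^2\eta^2\Bigr)\Bigl(\int \bigl(|\nabla^2 u|^2+|\nabla u|^2\bigr)\eta^2\Bigr),$$
together with the smallness hypothesis $\int |\nabla u|^2\eta^2\leq 2\epsilon_1$, allows its absorption into the left-hand side once $\epsilon_1$ is chosen small enough. This yields $\nabla^2 u\in L^2_{\mathrm{loc}}$ and hence $\nabla u\in L^4_{\mathrm{loc}}$ on $P_{3/2}^M(z_0)$ after rescaling back.

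With $\nabla u\in L^4_{\mathrm{loc}}$ and $\nabla v\in L^p_{\mathrm{loc}}$ for all $p$, the right-hand side of the $u$-equation lies in $L^2_{\mathrm{loc}}$, so parabolic $L^p$ theory gives $u\in W^{2,1}_{2,\mathrm{loc}}$; Sobolev embedding in $2+1$ dimensions upgrades $\nabla u$ to $L^q_{\mathrm{loc}}$ for every $q<\infty$, and a further iteration places the right-hand side in $L^q$ with $q>2$, whence Morrey-type parabolic embedding yields $\nabla u\in C^{\beta,\beta/2}_{\mathrm{loc}}$, proving \eqref{inequality:01} and the Hölder parts of \eqref{inequality:02}. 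Now $\beta(u)$ is Hölder, so elliptic Schauder for the divergence-form equation $-\mathrm{div}(\beta(u)\nabla v)=0$ with $C^{2+\alpha}$ Dirichlet data $\psi$ gives $v\in C^{2+\alpha}$ locally. Feeding this back, the right-hand side of the $u$-equation becomes $C^{\alpha,\alpha/2}$, and parabolic Schauder upgrades $u$ to the stated $C^{2+\alpha,1+\alpha/2}$ regularity. The global estimates \eqref{inequality:03}--\eqref{inequality:030} follow by covering $M$ with finitely many such small balls; near $\partial M$ the argument proceeds after straightening the boundary, the regularity $\phi,\psi\in C^{2+\alpha}$ of the fixed data making the corresponding boundary versions of the $L^p$ and Schauder estimates applicable.

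The main obstacle I expect is the Caccioppoli step: absorbing the critical quadratic gradient term via the smallness of $\epsilon_1$ must be carried out simultaneously with controlling the lower-order contributions coming from derivatives of $\eta$ and from the $|\nabla v|^2$ source, and this absorption is exactly what fixes the required size of $\epsilon_1$. The boundary version of the Caccioppoli inequality is also delicate because the cutoff $\eta$ must be compatible with the prescribed Dirichlet data, but the $C^{2+\alpha}$ regularity of $\phi,\psi$ together with standard straightening (or reflection) techniques should render it tractable.
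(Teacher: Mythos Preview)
Your approach is correct but differs from the paper's in the key step of obtaining the $L^\infty$ gradient bound \eqref{inequality:01}. You propose the direct Struwe--Caccioppoli route: multiply the $u$-equation by $-\eta^2\Delta u$, use the two-dimensional Ladyzhenskaya/Gagliardo--Nirenberg inequality to absorb the critical $|\nabla u|^4$ term under the smallness assumption, and bootstrap via parabolic $L^p$ theory. The paper instead uses Schoen's point-picking/intrinsic-scaling argument: it maximizes $(1-\sigma)^2\sup_{P_\sigma^M}|\nabla u|^2$ over $\sigma$, assumes this quantity exceeds $4$, rescales so that $|\nabla\tilde u|(0)=1$ and $\sup|\nabla\tilde u|\leq 4$, and derives a contradiction via compactness (a sequence argument showing $1\leq C\int|\nabla\tilde u|^2\leq C\epsilon_1$). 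Your method is more quantitative and yields $W^{2,2}$ control as a byproduct; the paper's method sidesteps the Caccioppoli integration-by-parts entirely, which is convenient near $\partial M$, and reduces the analysis to an $L^\infty$ setting from the outset. Both are standard in the harmonic-map-flow literature.

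One point you pass over quickly: to feed $C^{\alpha,\alpha/2}$ right-hand side into the final parabolic Schauder step, you need $|\nabla v|^2$ to be H\"older in \emph{time}, not just in space. Spatial Schauder for the elliptic $v$-equation at each fixed $t$ does not give this. The paper obtains it by subtracting the $v$-equations at two times $t,s$, observing that $v(\cdot,t)-v(\cdot,s)$ solves an elliptic equation with right-hand side controlled by $\|\nabla u(\cdot,t)-\nabla u(\cdot,s)\|_{L^\infty}\leq C|t-s|^{\alpha/2}$ and zero boundary data, and applying elliptic $W^{2,p}$/Schauder estimates to get $\|\nabla v(\cdot,t)-\nabla v(\cdot,s)\|_{L^\infty}\leq C|t-s|^{\alpha/2}$. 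You should insert this step before invoking parabolic Schauder for $u$.
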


\begin{proof}
\textbf{Step 1:} We prove \eqref{inequality:02}, \eqref{inequality:03} and \eqref{inequality:030} under the assumption that \eqref{inequality:01} is true.

Taking the cut-off function $\eta\in C_0^\infty(P_{r}^M(z_0))$ such that $0\leq\eta\leq 1$, $\eta|_{P_{3r/4}^M(0)}\equiv 1$, $|\nabla^j\eta|\leq \frac{C}{r^j},j=1,2$ and $|\partial_t\eta|\leq \frac{C}{r^2}$, set $U=\eta u$, then
\begin{align*}
\begin{cases}
\partial_tU-\Delta U=f, \quad &in\quad P_{r}^M(z_0);\\
U(x,t)=0,   \quad &on\quad B_{r}^M(x_0)\times \{t=t_0-r^2\};\\
U(x,t)=\eta\varphi,   \quad &on\quad \partial B^M_r(x_0)\times (t_0-r^2,t_0),
\end{cases}
\end{align*}
where \[
f:=\eta(\partial_t-\Delta)u+ u(\partial_t-\Delta)\eta -2\nabla\eta\nabla u.
\]
By the standard parabolic theory, for any $1<p<\infty$, we have
\begin{align*}
\|U\|_{W^{2,1}_p(P_{r}^M(z_0))}&\leq C\big(\|f\|_{L^p(P_{r}^M(z_0))}+\|\eta\varphi\|_{W^{2,1}_p(P_{r}^M(z_0))}+\|U\|_{L^p(P_{r}^M(z_0))}\big)\\
&\leq
C\big(1+\|\varphi\|_{C^2( M)}\big),
\end{align*}
where we use the fact that $f\in L^\infty$ under the equation \eqref{heateq} and assumption \eqref{inequality:01}. Then, by Sobolev's embedding, for any $0<\beta=1-4/p<1$, we obtain
\begin{align}\label{inequality:05}
\| u\|_{C^{\beta,\beta/2}(P_{3r/4}^M(z_0))}+\|\nabla u\|_{C^{\beta,\beta/2}(P_{3r/4}^M(z_0))}&\leq
\| U\|_{C^{\beta,\beta/2}(P_{r}^M(z_0))}+\| \nabla U\|_{C^{\beta,\beta/2}(P_{r}^M(z_0))}\notag\\&\leq C\|U\|_{W^{2,1}_p(P_{r}^M(z_0))}\leq
C(\beta)(1+\|\varphi\|_{C^2(M)}).
\end{align}

Taking a cut-off function $\xi(x)=\xi(|x-x_0|)\in C^\infty_0(B^M_r(x_0))$ such that $0\leq\xi\leq 1$, $\xi|_{B^M_{3r/4}}\equiv 1$ and $|\nabla^j\xi|\leq\frac{C}{r^j}$, $j=1,2$, set $V=\xi v$, then we have
\begin{align*}
\begin{cases}
\Delta V=h, & \mbox{ in } B^M_r(x_0); \\
  V=\xi \psi, & \mbox{ on } \partial B^M_r(x_0),
\end{cases}
\end{align*}
where $h=\Delta\xi v+2\nabla\xi\nabla v+\xi\Delta v\in L^\infty$. By the standard elliptic estimates and Sobolev embedding, we get
\begin{align}\label{inequality:04}
\|v\|_{C^{1,1-2/p}(B^M_{3r/4}(X_0))}\leq C\|V\|_{W^{2,p}(B^M_r(x_0))}\leq C(1+\|\psi\|_{C^2(M)})
\end{align}
for any $2<p<\infty$. Noting that \eqref{inequality:05} and \eqref{inequality:04} yields $\Delta v\in C^{\alpha}(B^M_{3r/4}(x_0))$, by the Schauder estimates and taking some suitable cut-off functions as before, we get
\begin{align}\label{inequality:06}
\|v\|_{C^{2+\alpha}(B^M_{r/2}(x_0))}\leq C(1+\|\phi\|_{C^2(M)})(1+\|\psi\|_{C^{2+\alpha}(M)})
\end{align}
for any $t_0-\frac{r^2}{4}\leq t\leq t_0$. Then \eqref{inequality:02} follows from \eqref{inequality:05} and \eqref{inequality:06} immediately.

To prove \eqref{inequality:03} and \eqref{inequality:030}, we rewrite the equation of $v$ as follows
$$\Delta v=\gamma(u)\nabla u\nabla v,$$
where $\gamma(u)=\frac{2B^\top(u)}{\beta(u)}$. Then for any $t_0-\frac{r^2}{4}<t,s<t_0$, we have
\begin{align*}
  \Delta (v(\cdot,t)-v(\cdot,s))=&\gamma(u(\cdot,t))\nabla u(\cdot,t)\nabla (v(\cdot,t)-v(\cdot,s))\\&+\left(\gamma(u(\cdot,t))\nabla u(\cdot,t)-\gamma(u(\cdot,s))\nabla u(\cdot,s)\right)\nabla v(\cdot,s) \ in\ M.
\end{align*}
Combining \eqref{inequality:01}, \eqref{inequality:05} with the fact that $v(\cdot,t)-v(\cdot,s)=0 \ on\ \partial M$, by the standard elliptic estimates and Sobolev embedding, we obtain
\begin{align*}
\|v(\cdot,t)-v(\cdot,s)\|_{C^{1+\alpha}(M)}\leq C\|\gamma(u(\cdot,t))\nabla u(\cdot,t)-\gamma(u(\cdot,s))\nabla u(\cdot,s)\|_{L^{\infty}(M)}\leq C|s-t|^{\alpha/2}.
\end{align*}
Thus, we get $\|\nabla v\|_{C^{\alpha,\alpha/2}(M\times [t_0-\frac{r^2}{4},t_0])}+\|\nabla v\|_{C^{\alpha,\alpha/2}(M\times [t_0-\frac{r^2}{4},t_0])}\leq C$ which is \eqref{inequality:030} and
\begin{align*}
\begin{cases}
  \partial_t u-\Delta u\in C^{\alpha,\frac{\alpha}{2}}(M\times [t_0-\frac{r^2}{4},t_0]), \\
  u|_{\partial M}=\phi\in C^{2+\alpha}(M).
\end{cases}
\end{align*}
Taking some suitable cut-off function and by the standard Schauder estimates for parabolic equations, we have $u\in C^{2+\alpha,1+\alpha/2}(M\times [t_0-\frac{r^2}{8},t_0])$ and
\begin{align*}
&\|u\|_{C^{2+\alpha,1+\alpha/2}(M\times [t_0-\frac{r^2}{8},t_0])}\\&\leq C(\|\partial_t u-\Delta u\|_{C^{\alpha,\frac{\alpha}{2}}(M\times [t_0-\frac{r^2}{4},t_0])}+\|u\|_{C^{0}(M\times [t_0-\frac{r^2}{4},t_0])}+\|\phi\|_{C^{2+\alpha}(M)})\leq C.
\end{align*}
Thus we get \eqref{inequality:03}.

\

\noindent\textbf{Step 2:} Next we prove \eqref{inequality:01}. The idea is similar as in \cite{LW, Schoen}. Without loss of generality, we may assume $r=\frac{1}{2}$. Choose $0\leq\rho<1$ such that
\[
(1-\rho)^2\sup_{P^M_\rho(z_0)}|\nabla u|^2=\max_{0\leq\sigma\leq 1}\{(1-\sigma)^2\sup_{P^M_\sigma(z_0)}|\nabla u|^2\}
\]
and choose $z_1=(x_1,t_1)\in P^M_\rho(z_0)$ such that
\[
|\nabla u|^2(z_1)=\sup_{P^M_\rho(z_0)}|\nabla u|^2:=e.
\]
We claim that $$(1-\rho)^2e\leq 4.$$

We proceed  by contradiction. If $(1-\rho)^2e>4$, we set
\begin{align*}
\widetilde{u}(x,t):=u(x_1+e^{-\frac{1}{2}}x,t_1+e^{-1}t)\quad and \quad \widetilde{v}(x):=v(x_1+e^{-\frac{1}{2}}x,t_1+e^{-1}t).
\end{align*}
Denoting
\[
D_r(0):=\{x\in B_r(0)|x_1+e^{-\frac{1}{2}}x\in B^M_1(x_0)\}
\]
and
\[
S_r:=\{(x,t)\in B_r(0)\times [-r^2,0]|(x_1+e^{-\frac{1}{2}}x,t_1+e^{-1}t)\in P^M_1(z_0)\},
\]
then
\begin{align}
\begin{cases}
\partial_t \widetilde{u}=\Delta \widetilde{u}+ A(\widetilde{u})(\nabla \widetilde{u},\nabla \widetilde{u})-B^\top(\widetilde{u})|\nabla \widetilde{v}|^2,\quad &in\quad S_1;\\
-div (\beta(\widetilde{u})\nabla \widetilde{v})=0, \quad &in\quad S_1,
\end{cases}
\end{align}
with the boundary data
\begin{eqnarray}
\begin{cases}
\widetilde{u}(x,t)=\phi(x_1+e^{-\frac{1}{2}}x),\quad &if\quad x_1+e^{-\frac{1}{2}}x\in \partial M;\\
\widetilde{v}(x,t)= \psi(x_1+e^{-\frac{1}{2}}x),\quad &if\quad x_1+e^{-\frac{1}{2}}x\in \partial M.
\end{cases}
\end{eqnarray}
Moreover, we have
\begin{align*}
\sup_{S_1}|\nabla \widetilde{u}|^2=e^{-1}\sup_{P^M_{e^{-1/2}}(z_1)}|\nabla u|^2 \leq e^{-1}\sup_{P^M_{\rho+e^{-1/2}}(z_0)}|\nabla u|^2
\leq e^{-1}\sup_{P^M_{\frac{1+\rho}{2}}(z_0)}|\nabla u|^2\leq 4
\end{align*}
and
\[
|\nabla \widetilde{u}|^2(0)=e^{-1}|\nabla u|^2(z_1)=1.
\]

Since $\widetilde{v}$ satisfies $$|\Delta \widetilde{v}|\leq C|\nabla \widetilde{u}||\nabla \widetilde{v}|,$$ by the standard elliptic estimates, for any $1<p<+\infty$ we have
\begin{equation}\label{inequality:14}
\sup_{-1\leq t\leq 0}\|\widetilde{v}\|_{W^{2,p}(D_{\frac{7}{8}}(0))}\leq C(p)(\|\nabla\widetilde{v}\|_{L^{p}(D_{1}(0))}+\|\psi\|_{C^2(M)}).
\end{equation}
Taking first $p=2$ and using \eqref{inequality:14} again ($p>2$), by Sobolev embedding for any $0<\beta=1-2/p<1$,
\begin{equation}
\sup_{-1\leq t\leq 0}\|\widetilde{v}\|_{C^{1+\beta}(D_{\frac{3}{4}}(0))}\leq C(\beta).
\end{equation}

Next, we want to show that there exists a constant $C>0$ such that
\begin{equation}\label{inequality:18}
1\leq C\int_{S_{3/4}}|\nabla \widetilde{u}|^2dxdt.
\end{equation}
If $C$ does not exist, we can find a sequence $\{(\widetilde{u}_i,\widetilde{v}_i)\}$ satisfying
\begin{align}\label{equation:01}
\begin{cases}
\partial_t \widetilde{u}_i=\Delta \widetilde{u}_i+ A(\widetilde{u}_i)(\nabla \widetilde{u}_i,\nabla \widetilde{u}_i)+(\nabla^N\beta)(\widetilde{u}_i)|\nabla \widetilde{v}_i|^2,\quad &in\quad S_1;\\
-div (\beta(\widetilde{u}_i)\nabla \widetilde{v}_i)=0, \quad &in\quad S_1,
\end{cases}
\end{align}
with the boundary data
\begin{eqnarray}\label{equation:02}
\begin{cases}
\widetilde{u}_i(x,t)=\phi(x_1+e^{-\frac{1}{2}}x),\quad &if\quad x_1+e^{-\frac{1}{2}}x\in \partial M;\\
\widetilde{v}_i(x,t)= \psi(x_1+e^{-\frac{1}{2}}x),\quad &if\quad x_1+e^{-\frac{1}{2}}x\in \partial M.
\end{cases}
\end{eqnarray}
and
\begin{equation}\label{inequality:07}
\sup_{S_{3/4}}\big(|\nabla \widetilde{u}_i|+|\nabla\widetilde{v}_i|\big)\leq C,
\end{equation}
\begin{equation}\label{inequality:17}
|\nabla \widetilde{u}_i|^2(0)=1,
\end{equation}
\begin{equation}\label{inequality:16}
\int_{S_{3/4}}|\nabla \widetilde{u}_i|^2dxdt\leq\frac{1}{i}.
\end{equation}

By a similar argument as in \textbf{Step 1} (since $(\widetilde{u_i},\widetilde{v_i})$ satisfy \eqref{equation:01}, \eqref{equation:02} and \eqref{inequality:07}), for any $0<\beta<1$, we have
\begin{align}
\|\nabla \widetilde{u}_i\|_{C^{\beta,\beta/2}(S_{1/2}(0))}\leq
C(\beta).
\end{align}
Therefore, there exist a subsequence of $\{\widetilde{u}_i\}$ (still denoted by $\{\widetilde{u}_i\}$) and a function $\overline{u}\in C^{1+\gamma,\gamma/2}(S_{1/2})$ such that
\[
\nabla \widetilde{u}_i\to \nabla \overline{u} \quad in \quad C^{\gamma,\gamma/2}(S_{1/2})
\]
where $0<\gamma<\beta$. Then by \eqref{inequality:16}, we know
\begin{equation}
\int_{S_{1/2}}|\nabla \overline{u}|^2dxdt=0
\end{equation}
which implies $\nabla \overline{u}\equiv 0$ in $S_{1/2}$. But, \eqref{inequality:17} tells us $|\nabla \overline{u}|(0)=1$. This is impossible and then \eqref{inequality:18} must be true.
Thus, we have
\begin{align*}
1\leq C\int_{S_{3/4}}|\nabla \widetilde{u}|^2dxdt &\leq
C\sup_{-1<t<0}\int_{B^M_{e^{\frac{1}{2}}}(x_1)}|\nabla u|^2(t_1+e^{-1}t)dx\\& \leq
C\sup_{-1<t<0}\int_{B^M_1(z_0)}|\nabla u|^2(t)dx\leq
C\epsilon_1.
\end{align*}
By choosing $\epsilon_1>0$ sufficiently small, it leads to a contradiction. Therefore we must have $(1-\rho)^2e\leq 4$ and then
\[
(1-3/4)^2\sup_{P^M_{3/4}(z_0)}|\nabla u|^2\leq (1-\rho)^2e\leq 4.
\]
Since $v$ satisfies $|\Delta v|\leq C(N)|\nabla u||\nabla v|$, $\|\nabla u\|_{L^\infty(P^M_{3/4}(z_0))}\leq 8$, $\|\nabla v\|_{L^4(M)}\leq C$ and $v|_{\partial M}=\psi\in C^{2+\alpha}(M)$, by the elliptic estimates for the  Laplace  operator and Sobolev embedding, we  easily get $$\|\nabla v\|_{L^\infty(P^M_{1/2}(z_0))}\leq C.$$ Thus we obtain the inequality \eqref{inequality:01} and finish the proof of the lemma.
\end{proof}

\

\section{short-time existence results}

To prove the local existence for the equations (\ref{heateq}), we first state some properties of the Dirichlet heat kernel when the dimension of the domain is  $2$. Let $G=G(x, y, t)$ be the heat kernel. We have

\begin{lemma}\label{estimateheatkernel} (estimates for Dirichlet heat kernel, see \cite{Ch}, \cite{Jost1})
For any $\alpha>0$, there exists a constant $c(\alpha)$ such that
$$G(x, y, t)\leq c(\alpha)t^{\alpha-1}dist(x, y)^{-2\alpha},$$
$$\|\nabla G(x, y, t)\|\leq c(\alpha)t^{\alpha-2}dist(x, y)^{1-2\alpha}.$$
\end{lemma}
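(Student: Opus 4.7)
The plan is to deduce both bounds from the classical Gaussian estimates for the Dirichlet heat kernel on a compact two-dimensional Riemannian manifold with smooth boundary, combined with the elementary trade-off $e^{-s}\leq C(\alpha)\,s^{-\alpha}$ for $s>0$. First I would record the standard Gaussian upper bounds: for $0<t\leq T_0$ (with $T_0$ fixed) there exist constants $C$ and $c$ depending only on $(M,h)$ and $T_0$ such that
\[
G(x,y,t)\leq \frac{C}{t}\exp\Bigl(-\frac{d(x,y)^2}{ct}\Bigr),\qquad |\nabla_x G(x,y,t)|\leq \frac{C\,d(x,y)}{t^{2}}\exp\Bigl(-\frac{d(x,y)^2}{ct}\Bigr).
\]
Both are standard in the references \cite{Ch,Jost1}: the size estimate follows from the parametrix construction of $G$ combined with the maximum principle (which controls $G$ by the free $\mathbb{R}^2$-heat kernel), and the gradient estimate can be obtained either by differentiating the parametrix or, equivalently, by combining the size estimate with the interior parabolic gradient bound $|\nabla G(x,y,t)|\lesssim t^{-1/2}\sup_{B(x,\sqrt{t})} G(\cdot,y,t/2)$, extended up to $\partial M$ via Schauder-type boundary estimates.

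Granting these Gaussian bounds, the remaining argument is purely algebraic. I apply the elementary inequality $e^{-s}\leq C(\alpha)\,s^{-\alpha}$ with $s=d(x,y)^2/(ct)$, absorbing the harmless factor $c^{-\alpha}$ into the constant, to obtain
\[
\exp\Bigl(-\frac{d(x,y)^2}{ct}\Bigr)\leq C(\alpha)\,t^{\alpha}\,d(x,y)^{-2\alpha}.
\]
Plugging this into the size bound gives $G(x,y,t)\leq c(\alpha)\,t^{\alpha-1}d(x,y)^{-2\alpha}$, and plugging it into the gradient bound gives
\[
|\nabla_x G(x,y,t)|\leq \frac{C\,d(x,y)}{t^{2}}\cdot C(\alpha)\,t^{\alpha}d(x,y)^{-2\alpha}=c(\alpha)\,t^{\alpha-2}d(x,y)^{1-2\alpha},
\]
which are exactly the two assertions of the lemma.

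The only place that really requires care is justifying the Gaussian \emph{gradient} bound up to the boundary: interior bounds are classical, but near $\partial M$ one must check that the Dirichlet condition does not destroy the $d(x,y)$-decay in $|\nabla_x G|$. The cleanest route is to use the double-layer/reflection parametrix construction carried out in \cite{Ch,Jost1}; alternatively one can run a Moser iteration for $\nabla G$ with cut-offs adapted to the boundary, using the already-established size estimate as input. This is the main obstacle in the proof, but it is standard PDE technology; once it is secured, the exponential-to-polynomial trade above immediately yields the stated form.
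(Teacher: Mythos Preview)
The paper does not supply its own proof of this lemma: it is stated with the attribution ``see \cite{Ch}, \cite{Jost1}'' and used as a black box in the short-time existence argument. So there is no proof in the paper to compare against.

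Your derivation is correct and is in fact the standard way to obtain these polynomial interpolation bounds from the Gaussian estimates. The Gaussian gradient form you wrote, $|\nabla_x G|\leq C\,d(x,y)\,t^{-2}\exp(-d(x,y)^2/(ct))$, is exactly what the explicit Euclidean kernel gives in dimension two, and the parametrix construction in the cited references transfers it to the Dirichlet kernel on $(M,h)$ with smooth boundary; your remarks about care near $\partial M$ are appropriate and the resolution you indicate (reflection/double-layer parametrix or boundary Schauder estimates) is precisely how it is handled in those references. One small point: as stated, the lemma has no restriction on $t$, while your Gaussian bounds are written for $0<t\leq T_0$. This is harmless here because the lemma is only invoked in the short-time existence proof, but if you want the full statement you can note that the Dirichlet kernel decays exponentially in $t$ by the spectral gap, so the bounds persist (with a $T_0$-dependent constant absorbed into $c(\alpha)$) for all $t>0$.
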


By using the above lemma, we can give the proof of Theorem \ref{thm:shortime-existence}.

\

\begin{proof}[\textbf{Proof of Theorem \ref{thm:shortime-existence}}]
For $\epsilon>0$ and $u\in C^1(M\times [0,\epsilon])$ and
$v\in C^1(M)$, we define the space
\begin{eqnarray*}
X_\epsilon&=&\{u|_{M\times \{t=0\}}=u|_{\partial M\times [0,\epsilon]}=\phi, u \in C^1(M)\ \  {\text{for any fixed}}\ \  t\in [0, \epsilon]
, \|u\|_{X}<+\infty\},
\end{eqnarray*}
where the norm of $X_\epsilon$ is defined by
$$
\|u\|_{X}:=\|u\|_{C^0(M\times [0, \epsilon])}
+\sup_{t\in [0,\epsilon]}\|\nabla u(.,t)\|_{C^0(M)}.
$$

For a solution $(u,v)$ of \eqref{heateq}, we claim that
\begin{equation}\label{v}
\|v\|_{X}<+\infty, \ \ {\text{if}}\ \  \|u\|_{X}<+\infty.
\end{equation}
Notice that $v$ satisfies the following equation
\begin{equation}\label{heateqv}
\left\{
\begin{array}{cc}
&-\Delta v=2\frac{B^\top(u)}{\beta(u)}\nabla u\cdot\nabla v,\\
&v|_{\partial \Omega}=\psi.
\end{array}
\right.
\end{equation}
The elliptic estimates for \eqref{heateqv} tell us that, for any $p>1$, we have
\begin{eqnarray}\label{estimateforv4}
\|v\|_{W^{1,p}}\leq C\|v\|_{W^{2,2}}&\leq& C\left\|\frac{B^\top(u)}{\beta(u)}\nabla u\cdot\nabla v\right\|_2
+C\|\psi\|_{W^{2,2}}\nonumber\\
&\leq& C\|\nabla u\|_{C^0}\|\nabla v\|_2+C\|\psi\|_{W^{2,2}}.
\end{eqnarray}
Therefore, for any $p>1$, we have the $W^{2,p}$ estimate
\begin{eqnarray}\label{estimateforv5}
\|v\|_{W^{2,p}}&\leq& C\|\nabla u\|_{C^0}\|\nabla v\|_{p}+C\|\psi\|_{W^{2,p}}\nonumber\\
&\leq& C\|\nabla u\|_{C^0}\{\|\nabla u\|_{C^0}\|\nabla v\|_2+C\|\psi\|_{W^{2,2}}\}+C\|\psi\|_{W^{2,p}}.
\end{eqnarray}
By (\ref{estimateforv1}), it implies that
\begin{eqnarray*}
\|v\|_{W^{2,p}}&\leq& C(\psi)(\|u\|_X^2+\|u\|_X)+C(\psi).
\end{eqnarray*}
Therefore, by the Sobolev embedding theorem we have, for some $\alpha\in (0,1)$,
\begin{equation}\label{estimateforv9}
\|v\|_{C^{1,\alpha}}\leq C(\psi)+C(\psi)\|u\|_X+C(\psi)\|u\|_X^2
\end{equation}
This proves the claim.

Define
$$u_0(x,t)=\int_{M} G(x,y,t)\phi(y)dy-\int_0^t\int_{\partial M}\frac{\partial G}{\partial n}(x-y,t-s)\phi(y)d\sigma ds.$$
We consider the operator ${\mathbb{T}}:X_\epsilon\rightarrow X_\epsilon$
\begin{eqnarray*}
{\mathbb{T}} u(x,t)&=&u_0(x,t)-\int_0^t\int_{M}G(x-y,t-s)(A(\nabla u,\nabla u)-B^{\top}|\nabla v|^2)(y,s) dy d s.
\end{eqnarray*}
For $\delta>0$, we define
$$B_\delta:=\left\{u\in X_\epsilon, \|u-u_0\|_X\leq \delta\right\}.$$

To prove the existence of a local solution, we need\\
i): $ {\mathbb{T}}: B_\delta\rightarrow B_\delta;$ \\
ii): ${\mathbb{T}}$ is a contraction mapping in $B_\delta$.

{\it Proof of i)}. For $u\in B_\delta$, we have
$$
{\mathbb{T}} u-u_0=-\int_0^t\int_{M}G(x-y,t-s)
(A(\nabla u,\nabla u)-B^{\top}|\nabla v|^2)(y,s) dy d s.
$$
Notice that, for any $u\in B_\delta$,
$$\|u\|_X\leq \|u-u_0\|_X+\|u_0\|_X\leq C.$$
By (\ref{v}) we know that
$$\|v\|_X\leq C.$$

Letting $\alpha\in (1,\frac{3}{2})$ in Lemma \ref{estimateheatkernel}.
For any $(x,t)\in M\times [0, \epsilon)$, we have
\begin{eqnarray}\label{contraction1}
\left|{\mathbb{T}} u-u_0\right|(x,t)&\leq&
C\int_0^t\int_{M}G(x-y,t-s)
\left(|\nabla u|^2+|\nabla v|^2\right) dy d s\nonumber\\
&\leq&C\left\{\sup_{t\in [0,\epsilon]}\|\nabla u\|^2_{C^0(M)}+C\right\}\cdot\int_0^t\int_{\Omega}G(x-y,t-s) dy d s\nonumber\\
&\leq&C \int_0^\epsilon (\epsilon-s)^{\alpha-1}\int_{M}d(x, y)^{-2\alpha}dy\nonumber\\
&\leq& C\epsilon^{\alpha}.
\end{eqnarray}
Furthermore, we have
\begin{eqnarray}\label{contraction2}
\left|\nabla({\mathbb{T}} u-u_0)\right|(x,t)&\leq&
C\int_0^t\int_{M}|\nabla_x G|(x-y,t-s)
\left(|\nabla u|^2+|\nabla v|^2\right) dy d s\nonumber\\
&\leq&C\int_0^t\int_{M}|\nabla_x G|(x-y,t-s) dy d s\nonumber\\
&\leq&C \int_0^\epsilon (\epsilon-s)^{\alpha-2}\int_{M}d(x, y)^{1-2\alpha}dy\nonumber\\
&\leq& C\epsilon^{\alpha-1},
\end{eqnarray}
where $C$ is a  constant which depends on the norm $\|u\|_X$ .
Then (\ref{contraction1}) and (\ref{contraction2}) give us that, for any $\delta>0$, there exists $\epsilon>0$, such that
${\mathbb{T}}$ is a map from $B_\delta$ in to $B_\delta$.

{\it Proof of ii).} We need to show that there exists $\rho\in (0,1)$ such that, for any $u_1,u_2\in B_\delta$,
$$
\|{\mathbb{T}} u_1-{\mathbb{T}} u_2\|_X\leq \rho \|u_1-u_2\|_X.
$$

We have
\begin{eqnarray}\label{contraction3}
\left|{\mathbb{T}} u_1-{\mathbb{T}} u_2\right|(x,t)&&\leq
\int_0^t\int_{M}G(x-y,t-s)
|A(u_1)(\nabla u_1,\nabla u_1)-A(u_2)(\nabla u_2,\nabla u_2)\nonumber\\
&&+B^{\top}(u_1)|\nabla v_{1}|^2-B^{\top}(u_2)|\nabla v_{2}|^2| dy d s.
\end{eqnarray}
Firstly, we estimate
\begin{eqnarray*}
&&|A(u_1)(\nabla u_1,\nabla u_1)-A(u_2)(\nabla u_2,\nabla u_2)|\\
&\leq&|A(u_1)(\nabla u_1,\nabla u_1)-A(u_1)(\nabla u_2,\nabla u_2)|
+|A(u_1)(\nabla u_2,\nabla u_2)-A(u_2)(\nabla u_2,\nabla u_2)|\\
&\leq& C\{(|\nabla u_1|+|\nabla u_2|)|\nabla(u_1-u_2)|+|\nabla u_2|^2|u_1-u_2|\}\\
&\leq& C\|u_1-u_2\|_X.
\end{eqnarray*}

In the following we estimate $|B^{\top}(u_1)|\nabla v_{1}|^2-B^{\top}(u_2)|\nabla v_{2}|^2|$. From \eqref{heateqv}, we have
$$div(\beta(u_1)\nabla v_{1})-div(\beta(u_2)\nabla v_{2})=0.$$
Multiply by $(v_{1}-v_{2})$ and integrate on $M$. We have
\begin{eqnarray}\label{estimateforv6}
0&=&\int_{M}div (\beta(u_1)\nabla v_{1}-\beta(u_2)\nabla v_{2})(v_{1}-v_{2})dx\nonumber\\
&=&\int_{M}\langle \beta(u_1)(\nabla v_{1}-\nabla v_{2})
+(\beta(u_1)-\beta(u_2))\nabla v_{2}, \nabla (v_{1}-v_{2}) \rangle dx,
\end{eqnarray}
which implies that
\begin{eqnarray}\label{estimateforv7}
\lambda\int_{M} |\nabla v_{1}-\nabla v_{2}|^2dx&\leq&\int_{M} \beta(u_1)|\nabla v_{1}-\nabla v_{2}|^2dx\nonumber\\
&=&\int_{M}\langle (\beta(u_1)-\beta(u_2))\nabla v_{2}, \nabla (v_{1}-v_{2})\rangle dx\nonumber\\
&\leq& \frac{\lambda}{2}\int_{M} |\nabla v_{1}-\nabla v_{2}|^2dx
+\frac{1}{2\lambda}\int_{M}|\beta(u_1)-\beta(u_2)|^2|\nabla v_{2}|^2dx.
\end{eqnarray}
From \eqref{estimateforv7}, we get
\begin{eqnarray}\label{estimateforv8}
\int_{M} |\nabla v_{1}-\nabla v_{2}|^2dx&\leq&C\int_{M}|\beta(u_1)-\beta(u_2)|^2|\nabla v_{2}|^2dx\nonumber\\
&\leq&C\|u_1-u_2\|_{C^0}^2\int_{M} |\nabla v_{2}|^2 dx\nonumber\\
&\leq&C(\psi)\|u_1-u_2\|_X^2.
\end{eqnarray}
By \eqref{heateqv}, we have
\begin{equation}\label{equationforv1v2}
\Delta (v_{1}-v_{2})=\gamma(u_1)\nabla u_1\cdot(\nabla v_{1}-\nabla v_{2})
+2\left(\gamma(u_1)\nabla u_1-\gamma(u_2)\nabla u_2\right)
\cdot\nabla v_{2}
\end{equation}
where $\gamma(u):=2\frac{B^\top(u)}{\beta(u)}$, with the boundary condition
$$(v_{1}-v_{2})|_{\partial M}=0.$$
The elliptic estimates for \eqref{equationforv1v2} tell us that
\begin{eqnarray}\label{estimateforv1v21}
\|v_{1}-v_{2}\|_{W^{2,2}}&\leq& C\|u_1\|_X\|\nabla v_{1}-\nabla v_{2}\|_2\nonumber\\
&&+C\|\nabla (u_1-u_2)\|_{C^0}\|\nabla v_{2}\|_2+C\|u_1-u_2\|_{C^0}\|\nabla u_2\|_{C^0}\|\nabla v_{2}\|_2.
\end{eqnarray}
By \eqref{estimateforv8}, we have, for any $p>1$,
\begin{eqnarray}\label{estimateforv1v22}
\|v_{1}-v_{2}\|_{W^{1,p}}
\leq C\|v_{1}-v_{2}\|_{W^{2,2}}
\leq C\|u_1-u_2\|_X
\end{eqnarray}
Applying this estimate and \eqref{estimateforv4} to \eqref{equationforv1v2}, we have
\begin{eqnarray}\label{estimateforv1v23}
\|v_{1}-v_{2}\|_{W^{2,p}}
&\leq& C\|u_1\|_X\|\nabla v_{1}-\nabla v_{2}\|_p\nonumber\\
&&+C\|\nabla (u_1-u_2)\|_{C^0}\|\nabla v_{2}\|_p+C\|u_1-u_2\|_{C^0}\|\nabla u_2\|_{C^0}\|\nabla v_{2}\|_p\nonumber\\
&\leq& C(\psi)\|u_1-u_2\|_X.
\end{eqnarray}
\eqref{estimateforv1v23} implies that
\begin{equation}\label{estimateforv1v24}
\|v_{1}-v_{2}\|_{C^{1+\alpha}}\leq C\|u_1-u_2\|_X.
\end{equation}
Therefore, we get from \eqref{estimateforv9} and \eqref{estimateforv1v24},
\begin{eqnarray}\label{contraction4}
&&|B^{\top}(u_1)|\nabla v_{1}|^2-B^{\top}(u_2)|\nabla v_{2}|^2|\nonumber\\
&&\leq (B^{\top}(u_1)-B^{\top}(u_2))|\nabla v_{1}|^2+B^{\top}(u_2)\langle\nabla v_{1},\nabla (v_{1}-v_{2})\rangle
\nonumber\\
&&\quad+B^{\top}(u_2)\langle\nabla v_{2},\nabla (v_{1}-v_{2})\rangle\nonumber\\
&&\leq C |\nabla v_{1}|^2|u_1-u_2|+C|\nabla v_{1}||\nabla (v_{1}-v_{2})|
+C|\nabla v_{2}||\nabla (v_{1}-v_{2})|\nonumber\\
&&\leq C(\psi)\|u_1-u_2\|_X.
\end{eqnarray}
\eqref{contraction3} and \eqref{contraction4} give us
\begin{equation}\label{contraction5}
\left|{\mathbb{T}} u_1-{\mathbb{T}} u_2\right|(x,t)\leq C(\psi)\epsilon\|u_1-u_2\|_X.
\end{equation}
Similarly, we can also show that
\begin{equation}\label{contraction6}
\left|\nabla ({\mathbb{T}} u_1-{\mathbb{T}} u_2)\right|(x,t)\leq C(\psi)\epsilon\|u_1-u_2\|_X.
\end{equation}
Then we can conclude the claim that ${\mathbb{T}}$ is a contraction mapping in $B_\delta$ which implies immediately that there exists a unique fixed point $(u,v)\in B_\delta$ of ${\mathbb{T}}$ such that $(u,v)$ solves (\ref{heateq}).

{\it Regularity:} Since $\nabla u,\nabla v\in L^\infty([0,\epsilon]\times M)$, according to the classical $W^{2,1}_p$ estimates of second order parabolic equations, for any $p>1$, we have $$\|u\|_{W^{2,1}_p([0,\epsilon]\times M)}\leq C(1+\|\phi\|_{W^{2,p}}(M))$$ which implies $\nabla u\in C^{\alpha,\frac{\alpha}{2}}([0,\epsilon]\times M)$ by Sobolev embedding.
The Schauder estimates for parabolic and elliptic equations give us that the fixed point $(u,v)$ has the desired regularity. (See Lemma \ref{lem:small-energy-regularity} for a similar argument.)

We still need to show that if the image of the initial map $\phi(M)\subset N$, along the flow, we have $u(M \times [0,\epsilon))\subset N$. To this end, let $\pi: M_\sigma\rightarrow M$ be the smooth nearest point projection map. We now compute the evolution equation of
$$
\rho(u):=|\pi(u)-u|^2.
$$
\begin{eqnarray}\label{heatrho}
\frac{1}{2}(\partial_t-\Delta) \rho&=&\langle\pi -u, \partial_t( \pi-u)\rangle
-\left|\nabla(\pi-u)\right|^2-\langle\pi-u, \Delta(\pi-u)\rangle\nonumber\\
&=&\langle\pi -u, d\pi(\partial_t u)-\partial_t u\rangle
-\left|\nabla(\pi-u)\right|^2
-\left\langle\pi-u, \nabla\cdot(d\pi(\nabla u))-\Delta u\right\rangle\nonumber\\
&=&-\left|\nabla(\pi-u)\right|^2
+\left\langle\pi-u, d\pi((\partial_t-\Delta)u)\right\rangle\nonumber\\
&&+\left\langle\pi-u, A(\nabla u,\nabla u)-(\partial_t-\Delta)u\right\rangle\nonumber\\
&=&-\left|\nabla(\pi-u)\right|^2
+\left\langle\pi-u, d\pi((\partial_t-\Delta)u)\right\rangle+\left\langle\pi-u, B^{\top}|\nabla v|^2\right\rangle\nonumber\\
&=&-\left|\nabla(\pi-u)\right|^2\leq 0.
\end{eqnarray}
To get the last equality of (\ref{heatrho}), we shall notice that, for $p\in N$, $d\pi(p)$ is an orthogonal projection to $T_p N$, $B^{\top}$ is orthogonal to $(\pi-u)$. Since $\rho(u)(\cdot, 0)=\rho(u)|_{\partial M\times [0,\epsilon]}=0$, by the maximum principle, we have $\rho(u)\equiv 0$.

{\it Finite singularities:} By Lemma \ref{lem:small-energy-regularity}, we know that the maximum existence time $T_1$ is characterized by $$\limsup_{x\in M,t\to T_1}E(u;B^M_r(x))>\epsilon_1\mbox{ for all } r>0.$$
We just need to prove that the singular set $S(u,T_1)$ at the singular time $T_1$ is a finite set.

Let $\{x_j\}_{j=1}^J$ be any finite subset of $S(u,T_1)$. Then we have $$\limsup_{t\to T_1}E(u;B^M_r(x_j))>\epsilon_1\mbox{ for all } r>0,\ 1\leq j\leq J.$$ Therefore, we can choose $R>0$ such that $\{B^M_{2R}(x_j)\}_{j=1}^J$ are mutually disjoint. By Lemma \ref{lem:two-balls}, we get that for any $1\leq j\leq J$ and any $s\in [T_1-\frac{\epsilon_1R^2}{2C},T_1)$, there holds
\begin{align*}
J\epsilon_1\leq \sum_{j=1}^J\limsup_{t\to T_1}E(u(t);B^M_R(x_j))&\leq \sum_{j=1}^J\left(E(u(s);B^M_{2R}(x_j))+\frac{\epsilon_1}{2}\right)\\
&\leq E(u(s))+\frac{J\epsilon_1}{2}
\end{align*}
which implies $$J\leq \frac{2(1+\Lambda)E(\phi,\psi)}{\epsilon_1}.$$ So, we proved the finiteness of $S(u,T_1)$.

{\it Uniquness:} Suppose $(u_1, v_1), (u_2, v_2)$ are two solutions of  \eqref{heateq}  and \eqref{boundary-data}. Let $\tilde{u}=u_1-u_2,$  $\tilde{v}=v_1-v_2$. By \eqref{contraction5} and \eqref{contraction6}, we know that, for any $\varepsilon>0$,
\begin{eqnarray*}
\|\tilde{u}\|_X\leq \varepsilon \|\tilde{u}\|_X.
\end{eqnarray*}
It implies that $\tilde u\equiv 0$. Then by \eqref{estimateforv1v24}, we get that $\tilde v\equiv 0$.
\end{proof}

\

\section{behavior of singularities}

In this section, we use the blow up analysis to study the behavior of singularities at the singular time of the solution derived by Theorem \ref{thm:shortime-existence}. We will prove Theorem \ref{thm:blow-up} in this section.

First, we recall a removable singularity theorem which will be used in this section.
\begin{theorem}[Theorem 3.4 in \cite{hanzhaozhu}]\label{thm:removable-singu}
Let $(u,v)$ be a smooth harmonic map from the punctured disk $D\setminus \{0\}$ to $(N\times \R,g-\beta (d\theta)^2)$ with bounded energy $E(u,v;M)<\infty$, where $D\subset\R^2$ is the unit disk, then $(u,v)$ extends to the whole disk $D$.\end{theorem}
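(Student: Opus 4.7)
The plan is to reduce to a Sacks--Uhlenbeck-type removable singularity argument, handling the scalar component $v$ first via divergence-form elliptic theory and then the target-valued component $u$ via a small-energy $\varepsilon$-regularity on annuli.

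The starting observation is that $v$ is a weak solution of $\div(\beta(u)\nabla v)=0$ on $D\setminus\{0\}$, with coefficients satisfying $\lambda\le\beta(u)\le\Lambda$ and finite Dirichlet energy. Since a single point in $\R^2$ has zero $W^{1,2}$-capacity, $v$ extends to a $W^{1,2}(D)$ function that remains a weak solution on the full disk: testing against $\varphi(1-\eta_\varepsilon)$ with a logarithmic cut-off $\eta_\varepsilon$ near $0$ and letting $\varepsilon\to 0$ eliminates the contribution of any neighbourhood of the origin. De~Giorgi--Nash--Moser then gives $v\in C^{\alpha}(D)$ for some $\alpha\in(0,1)$.

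Next I would establish annular $\varepsilon$-regularity. Using $E(u,v;D)<\infty$, pick $r_0>0$ so that $E(u,v;D_{r_0})<\epsilon_1$ for a small universal threshold $\epsilon_1$. On each dyadic annulus $A_k=\{2^{-k-1}\le|x|\le 2^{-k}\}$, apply the conformal rescaling $\tilde u(y)=u(2^{-k}y)$, $\tilde v(y)=v(2^{-k}y)$, which preserves both the system and the 2D Dirichlet energy. An elliptic small-energy regularity lemma for the coupled stationary system---the stationary analogue of Lemma~\ref{lem:small-energy-regularity}, in which the $W^{1,p}$ bound of Lemma~\ref{lem:Lp-estimates-v} replaces the parabolic $v$-estimate---then yields
\[
\sup_{x\in A_k}|x|\bigl(|\nabla u(x)|+|\nabla v(x)|\bigr)\le C\,\|\nabla(u,v)\|_{L^2(A_{k-1}\cup A_k\cup A_{k+1})}\longrightarrow 0
\]
as $k\to\infty$, since the right-hand side is the tail of a convergent integral. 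Summing $|\nabla u|$ along dyadic radial paths gives $\mathrm{osc}_{D_{2^{-k}}}u\to 0$, so $u$ extends continuously to $0$, and likewise for $v$. Combined with the first step, $(u,v)\in C^0(D)\cap W^{1,2}(D)$ is a weak solution on all of $D$, and standard bootstrapping via $L^p$-theory and Schauder estimates upgrades the regularity to $C^\infty(D)$.

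The main obstacle lies in the $\varepsilon$-regularity step: because the $u$-equation carries the extra source $-B^\top(u)|\nabla v|^2$ absent from the classical Sacks--Uhlenbeck setting, one must control $u$ and $v$ simultaneously. The remedy is to exploit the divergence-form structure of the $v$-equation, as in Lemma~\ref{lem:Lp-estimates-v}, to first bound $\nabla v$ in $L^p$ purely in terms of the local small energy and the Dirichlet data, then feed this back into a hole-filling / Moser iteration for $u$. Matching the conformal scaling so that the reciprocal estimates for the two equations close uniformly on each annulus is the delicate point of the argument.
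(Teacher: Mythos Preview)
Your argument is sound and would work, but it takes a considerably longer route than the paper. The paper's proof is two lines: first, exactly as in your opening paragraph, use the zero $W^{1,2}$-capacity of a point in $\R^2$ (via the logarithmic cut-off, as in Lemma~A.2 of \cite{Jost}) to show that $(u,v)$ is a \emph{weak} Lorentzian harmonic map on the full disk $D$; second, invoke the regularity theorem of Zhu \cite{zhu} for weakly harmonic maps into pseudo-Riemannian targets, which directly gives smoothness across $0$. No annular $\varepsilon$-regularity, oscillation decay, or dyadic summation is needed.

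What you do differently is to reprove, in effect, the interior regularity result of \cite{zhu} in this special setting, by running the Sacks--Uhlenbeck annular small-energy scheme coupled with the divergence-form $v$-estimate. This is a legitimate self-contained alternative and has the merit of not appealing to an external regularity theorem, but it is substantially more work: the delicate point you flag---closing the coupled $\varepsilon$-regularity uniformly on each annulus---is precisely what \cite{zhu} already packages. The paper's approach buys brevity by citing that result; yours buys independence from it.
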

\begin{proof}
We repeat the idea of the proof of \cite{hanzhaozhu} here for completeness. By a similar argument as in Lemma A.2 in \cite{Jost}, it is easy to see that $(u,v)$ is a weakly harmonic map from $D$ into $N\times\R$. Then the regularity Theorem 1.3 in \cite{zhu} gives that $(u,v)$ is smooth in $D$ and hence the singularity point $\{0\}$ is removable.
\end{proof}

\

\begin{proof}[\textbf{Proof of Theorem \ref{thm:blow-up}}]
Let $(x_0,T_1)$ be a singularity. Without loss of generality, we assume $0<T_1<\infty$. The proof of $T_1=\infty$ is similarly.

Since there are at most finitely many singular points at the singular time $T_1$, we may assume $$\nabla u\in C_{loc}^{\alpha,\alpha/2}(B^M_\delta(x_0)\times [T_1-\delta^2,T_1]\setminus\{(x_0,T_1)\})$$ for some $\delta>0$. By Lemma \ref{lem:small-energy-regularity}, there exist sequences $t_i\nearrow T_1$, $x_i\to x_0$, $r_i\to 0$ such that
\begin{align}\label{equation:blowup-rules}
E(u(\cdot,t_i),B^M_{r_i}(x_i))=\sup_{\substack{(x,t)\in B^M_\delta(x_0)\times [T_1-\delta^2,t_i]\\B^M_r(x)\subset B^M_\delta(x_0),r\leq r_i}}E(u(\cdot,t_i),B^M_{r}(x))=\frac{\epsilon_1}{2}.
\end{align}
According to Lemma \ref{lem:two-balls}, for any $T_1-\delta^2\leq s\leq t_i<T_1$, there holds
\begin{align*}
E(u(t_i);B^M_{r_i}(x_i))\leq E(u(s);B^M_{2r_i}(x_i))+C_1\frac{t_i-s}{r_i^2}+C_2(t_i-s),
\end{align*}
where $C_1$ and $C_2$ are the constants in Lemma \ref{lem:two-balls}. Setting $\tau=\frac{\epsilon_1}{8C_1}+\frac{\epsilon_1}{8C_2}$, then for any $s\in [t_i-\tau r_i^2,t_i]$, we get
\begin{align}\label{inequality:08}
E(u(s);B^M_{2r_i}(x_i))\geq\frac{\epsilon_1}{4}.
\end{align}

We first deal with the second statement in the theorem.

\

\textbf{Step 1:} Let $x_0\in \partial M$ and we show the statement $(1)$ holds under the assumption $$\limsup_{i\to \infty}\frac{dist(x_i,\partial M)}{r_i}=\infty.$$ After passing to a subsequence, we may assume $\lim_{i\to \infty}\frac{dist(x_i,\partial M)}{r_i}=\infty$. As $i$ tends to infinity, we can assume $t_i-\frac{\delta^2}{4}>T_1-\delta^2$. Denote $$D_i:=\{x\in\R^2|x_i+r_ix\in B^M_\delta(x_0)\}$$ and $$u_i(x,t):=u(x_i+r_ix,t_i+r_i^2t); v_i(x,t):=v(x_i+r_ix,t_i+r_i^2t).$$ It is easy to see that $(u_i,v_i)$ lives in $D_i\times [-\frac{\delta^2}{4r_i^2},0]$ which tends to $\R^2\times (-\infty,0]$ as $i\to \infty$ and satisfies
\begin{align}
\begin{cases}
\partial_t u_i=\Delta u_i+ A(u_i)(\nabla u_i,\nabla u_i)+B^\top(u_i)|\nabla v_i|^2,\ &in\ D_i\times [-\frac{\delta^2}{4r_i^2},0] \\
-div (\beta(u_i)\nabla v_i)=0, \ &in\ D_i\times [-\frac{\delta^2}{4r_i^2},0]
\end{cases}
\end{align}
with the boundary data
\begin{align}
\begin{cases}
u_i(x,t)=\phi(x_i+r_ix), \  &if\  x_i+r_ix\in\partial M,\\
v_i(x,t)=\psi(x_i+r_ix),\  &if \  x_i+r_ix\in\partial M.
\end{cases}
\end{align}
By Lemma \ref{lem:Dirichlet-energy} and Corollary \ref{cor:energy-t}, for any $\tau$ we have
\begin{align}\label{inequality:09}
\int_{-\tau}^0\int_{D_i}|\partial_tu_i|^2dxdt\leq\int_{t_i-r_i^2\tau}^{t_i}\int_M|\partial_t u|^2dMdt  \to 0, \quad {\text as} \ i\to \infty,
\end{align}
and
\begin{equation}\label{inequality:10}
\sup_{-\frac{\delta^2}{4r_i^2}\leq t\leq0}\|dv_i\|_{L^2(D_i)}\leq\sup_{T_1-\delta^2\leq t\leq T_1}\|dv\|_{L^2(M)}\leq C,
\end{equation}
\begin{equation}\label{inequality:11}
\sup_{-\frac{\delta^2}{4r_i^2}\leq t\leq0}\|du_i\|_{L^2(D_i)}\leq\sup_{T_1-\delta^2\leq t\leq T_1}\|du\|_{L^2(M)}\leq C.
\end{equation}
By \eqref{equation:blowup-rules}, we can see that
\begin{align*}
\sup_{-\tau\leq t\leq 0}\sup_{x\in D_i}\int_{B_1(x)\cap D_i}|\nabla u_i|^2(y,t)dy\leq
\sup_{\substack{(x,t)\in B^M_\delta(x_0)\times [T_1-\delta^2,t_i]\\B^M_r(x)\subset B^M_\delta(x_0),\ r\leq r_i}}E(\Phi(t),B^M_{r}(x))=\frac{\epsilon_1}{2}.
\end{align*}
So, for any $x\in \mathbb{R}^2$, when $i$ is sufficiently large, we have
\begin{equation}\label{inequality:12}
\sup_{-\tau\leq t\leq 0}\int_{B_1(x)}|\nabla u_i|^2(y,t)dy\leq\frac{\epsilon_1}{2}.
\end{equation}
Combining \eqref{inequality:10}, \eqref{inequality:12} with Lemma \ref{lem:small-energy-regularity}, we have
\begin{equation}\label{equation:03}
\sup_{-\frac{\tau}{2}\leq t\leq 0}\|v_i(\cdot,t)\|_{C^{2+\alpha}(B_{1/2}(x))}+\sup_{-\frac{\tau}{2}\leq t\leq 0}\|u_i(\cdot,t)\|_{C^{1+\alpha}(B_{1/2}(x))}\leq C,
\end{equation}
which tells us
\begin{equation}\label{equation:04}
\sup_{-\frac{\tau}{2}\leq t\leq 0}\|v_i(\cdot,t)\|_{C_{loc}^{2+\alpha}(\mathbb{R}^2)}
+\sup_{-\frac{\tau}{2}\leq t\leq 0}\|u_i(\cdot,t)\|_{C_{loc}^{1+\alpha}(\mathbb{R}^2)}\leq C.
\end{equation}
From \eqref{inequality:09} and \eqref{equation:04}, we can find $\sigma_i\in[-\frac{\tau}{2},0]$ such that as $i\to\infty$, there holds
\begin{align}
\int_{D_i}|\partial_tu_i|^2(x,\sigma_i)dx \to 0
\end{align}
and
\begin{equation}
\|v_i(\cdot,\sigma_i)\|_{C_{loc}^{2+\alpha}(\mathbb{R}^2)}
+\|u_i(\cdot,\sigma_i)\|_{C_{loc}^{1+\alpha}(\mathbb{R}^2)}\leq C.
\end{equation}
Therefore, there exist $\widetilde{u}\in C^1_{loc}(\mathbb{R}^2)$, $\widetilde{v}\in C^2_{loc}(\mathbb{R}^2)$ and a subsequence of $(u_i(\cdot,\sigma_i),v_i(\cdot,\sigma_i))$ such that
\begin{align*}
u_i(\cdot,\sigma_i)\to\widetilde{u}\quad in \quad C^1_{loc}(\mathbb{R}^2)\mbox{ and }v_i(\cdot,\sigma_i)\to\widetilde{v}\quad in \quad C^2_{loc}(\mathbb{R}^2).
\end{align*}
Setting $t=\sigma_i$ in the equation \eqref{equation:01} and letting $i\to\infty$, it is easy to see that $(\widetilde{u},\widetilde{v})$ is a harmonic map from $\R^2$ into the Lorentzian manifold $N\times \R$ with
\[
\frac{\epsilon_1}{4}\leq\|\nabla\widetilde{u}\|_{L^2(\mathbb{R}^2)}\leq C,\ \|\nabla\widetilde{v}\|_{L^2(\mathbb{R}^2)}\leq C.
\]
Here we use \eqref{inequality:08} and \eqref{inequality:11}. Let $f:\R^2\to {\mathbb{S}}^2\setminus\{S\}$ be the stereographic projection, where $S$ is the south pole of the sphere. Due to the conformal invariance and removable singularity Theorem \ref{thm:removable-singu}, $(\widetilde u(f^{-1}(x)),\widetilde v(f^{-1}(x)))$ is a harmonic map from ${\mathbb{S}}^2$ into the Lorentzian manifold $N\times \R$. For simplicity, we still denote $(\widetilde u(f^{-1}(x)),\widetilde v(f^{-1}(x)))$ by $(\widetilde u,\widetilde v)$. It is clear that $\widetilde v$ satisfies the equation $div(\beta(\widetilde u)\nabla \widetilde v)=0$ in ${\mathbb{S}}^2$ with finite energy $\|\nabla\widetilde v\|_{L^2({\mathbb{S}}^2)}\leq C$. It follows that $\widetilde v$ must be a constant map. Then $\widetilde u$ is a nontrivial harmonic sphere.

\

\noindent\textbf{Step 2}: If $x_0\in\partial M$, then $\limsup_{i\to\infty}\frac{dist(x_i,\partial M)}{r_i}\to \infty$.

\

If not, up to subsequence, we may assume $\frac{dist(x_i,\partial M)}{r_i}\to a$ as $i\to\infty$. Then
\[
D_i\to \mathbb{R}^2_a:=\{(x^1,x^2)|x^2\geq -a\}.
\]
Furthermore, we have that, for any $x\in\{x^2=-a\}$ on the boundary, $x_i+r_ix\to x_0$ and
\begin{align*}
u_i(x,t)=\varphi(x_i+r_ix)\quad {\text if} \quad x_i+r_ix\in\partial M;\\
v_i(x,t)=\psi(x_i+r_ix)\quad {\text if} \quad x_i+r_ix\in\partial M;
\end{align*}
By Lemma \ref{lem:small-energy-regularity} and \eqref{equation:blowup-rules}, for any $B_{4R}(0)\subset \mathbb{R}^2,R>0$, we have
\begin{equation}\label{equation:05}
\sup_{-\frac{\tau}{2}\leq t\leq 0}\|v_i(\cdot,t)\|_{C^{2+\alpha}(B_{4R}(0)\cap D_i)}
+\sup_{-\frac{\tau}{2}\leq t\leq 0}\|u_i(\cdot,t)\|_{C^{1+\alpha}(B_{4R}(0)\cap D_i)}\leq C.
\end{equation}
From \eqref{inequality:09} and \eqref{equation:05}, we can find $\sigma_i\in[-\frac{\tau}{2},0]$ such that as $i\to\infty$, we have
\begin{align}
\int_{D_i}|\partial_tu_i|^2(x,\sigma_i)dx \to 0
\end{align}
and
\begin{equation}\label{equation:06}
\|v_i(\cdot,\sigma_i)\|_{C^{2+\alpha}(B_{4R}(0)\cap D_i)}
+\|u_i(\cdot,\sigma_i)\|_{C^{1+\alpha}(B_{4R}(0)\cap D_i)}\leq C.
\end{equation}
Setting $d_i:=dist(x_i,\partial M)$ and $B_R^+(0):=\{(x_1,x_2)\in B_R(0)|x_2\geq 0\}$, for $i,R$ sufficiently large, \eqref{equation:06} implies
\begin{equation}
\|v_i(x-(0,\frac{d_i}{r_i}),\sigma_i)\|_{C^{2+\alpha}(B^+_{3R}(0))}
+\|u_i(x-(0,\frac{d_i}{r_i}),\sigma_i)\|_{C^{1+\alpha}(B^+_{3R}(0))}\leq C.
\end{equation}
Then there exist a subsequence of $(u_i,v_i)$ and a harmonic map $(\widetilde{u}^1,\widetilde{v}^1)\in C_{loc}^{2+\alpha}(\R^2_a,N\times\R)$ satisfying $\widetilde{u}^1|_{\partial^0\R_0^2}=\phi(x_0)$, $\widetilde{v}^1|_{\partial^0\R_0^2}=\psi(x_0)$ such that
\begin{align*}
\lim_{i\to\infty}\|v_i(x-(0,\frac{d_i}{r_i}),\sigma_i)-\widetilde{v}^1\|_{C^{1}(B^+_{3R}(0))}
&=0,\\
\lim_{i\to\infty}\|u_i(x-(0,\frac{d_i}{r_i}),\sigma_i)-\widetilde{u}^1\|_{C^{1}(B^+_{3R}(0))}
&=0.
\end{align*}
Set $\widetilde{u}(x):=\widetilde{u}^1(x+(0,a))$ and $\widetilde{v}(x):=\widetilde{v}^1(x+(0,a))$. We have
\begin{align*}
\lim_{i\to\infty}\|v_i(\cdot,\sigma_i)-\widetilde{v}\|_{W^{1,2}(B_{2R}(0)\cap D_i\cap \R_a^2)}
&=0,\\
\lim_{i\to\infty}\|u_i(\cdot,\sigma_i)-\widetilde{u}\|_{W^{1,2}(B_{2R}(0)\cap D_i\cap \R_a^2)}
&=0.
\end{align*}
Combining these with \eqref{equation:06} and noticing that the measure of $B_{2R}(0)\cap D_i\setminus \R_a^2$ and $B_{2R}(0)\cap\R_a^2\setminus D_i$ goes to zero, we have
\begin{align*}
\lim_{i\to\infty}\|v_i(\cdot,\sigma_i)\|_{W^{1,2}(B_{R}(0)\cap D_i)}=\|\widetilde{v}\|_{W^{1,2}(B_{R}(0)\cap \R_a^2 )},\\
\lim_{i\to\infty}\|u_i(\cdot,\sigma_i)\|_{W^{1,2}(B_{R}(0)\cap D_i)}=\|\widetilde{u}\|_{W^{1,2}(B_{R}(0)\cap \R_a^2 )}.
\end{align*}
According to \eqref{inequality:08},\eqref{inequality:10} and \eqref{inequality:11}, we obtain
\begin{align}\label{inequality:15}
\frac{\epsilon_1}{4}\leq\|\nabla\widetilde{u}\|_{L^2(\mathbb{R}_a^2)}\leq C;\ \|\nabla\widetilde{v}\|_{L^2(\mathbb{R}_a^2)}\leq C.
\end{align}

Due to the conformal invariance, we can take $(\widetilde{u},\widetilde{v})$ as a harmonic map from the unit disk $B_1(0)$ into the Lorentzian manifold $N\times \R$. Since $\widetilde{v}$ satisfies $$div(\beta(\widetilde{u})\nabla \widetilde{v})=0\ {\text in}\ B_1(0)$$ and $\widetilde{v}|_{\partial B_1(0)}\equiv\psi(x_0)$, $\widetilde{v}$ must be a constant map. Thus, $\widetilde{u}$ is a harmonic maps from $B_1(0)$
with constant boundary data $\widetilde{u}|_{\partial B_1(0)}=\phi(x_0)$ which should be a constant map \cite{Lemaire}. This is a contradiction with \eqref{inequality:15} and the second statement $(2)$ is proved.

For the first statement in the theorem, the argument is almost the same as what we have done in \textbf{Step 1} and we omit it here for brevity.
\end{proof}

\

\section{long time existence and convergence results}

In this section, we use  arguments from blow up analysis to prove some long time existence and convergence results for \eqref{heateq} and \eqref{boundary-data}. Theorem \ref{thm:small-initial-energy} and Theorem \ref{thm:convex-func-target-mfd} will be proved in this section.

\

\begin{proof}[\textbf{Proof of Theorem \ref{thm:small-initial-energy}}]
By the short time existence theorem, we just need to show that the solution $(u,v)$ does not blow up at any time $t\in(0,\infty]$.

If not, we may assume that $T_1$ is the first singular (or blow up) time and $(x_0,T_1)\in S(u,T_1)$ is a singular point (or energy concentration point), $i.e.$
$$\limsup_{t\to T_1}E(u;B^M_R(x_0))>\epsilon_1\mbox{ for all } R>0.$$ Let $t_i,x_i,r_i,\widetilde{u}$ be as in Theorem \ref{thm:blow-up}. Denote $$u_i(x):=u(x,t_i).$$
By Theorem \ref{thm:shortime-existence}, we know that the singular set $S(u,T_1)$ at the singular time $T_1$ is a finite set and we may denote it by $S(u,T_1):=\{p_1,...,p_J\}$. By Lemma \ref{lem:Dirichlet-energy}, we have $E(u_i;M)\leq E(\phi)$. Thus, there exists a weak limit in $W^{1,2}(M,N)$ which is denoted by $u(x,T_1)$ such that $u(x,T_1)|_{\partial M}=\phi$ and $$u_i(x)\rightharpoonup u(x,T_1)$$ as $i\to\infty$. Moreover, by the definition of $S(u,T_1)$ and Lemma \ref{lem:small-energy-regularity}, we know $$u_i(x)\to u(x,T_1)\ in\ C^2_{loc}(M\setminus S(u,T_1))$$ as $i\to\infty$. Therefore we have
\begin{align*}
\lim_{i\to\infty}E(u_i;M)&=\lim_{\delta\to 0}\lim_{i\to\infty}(E(u_i;M\setminus \cup_{j=1}^JB^M_\delta(p_j))+E(u_i;\cup_{j=1}^JB^M_\delta(p_j)))\\
&=E(u(x,T_1);M)+\sum_{j=1}^J\lim_{\delta\to 0}\lim_{i\to\infty}E(u_i;B^M_\delta(p_j))\\
&\geq E(u(x,T_1);M)+\lim_{\delta\to 0}\lim_{i\to\infty}E(u_i;B^M_\delta(x_0))\\
&\geq E(u(x,T_1);M)+\lim_{R\to\infty}\lim_{\delta\to 0}\lim_{i\to\infty}E(u_i;B^M_{r_iR}(x_i))\\
&= E(u(x,T_1);M)+E(\widetilde{u};S^2),
\end{align*}
where $\widetilde{u}$ is a nontrivial harmonic sphere.

By the definition of $\overline{\epsilon}_1,\overline{\epsilon}_2,\overline{\epsilon}$(see Theorem \ref{thm:small-initial-energy}) and Lemma \ref{lem:Dirichlet-energy}, we have
\begin{align*}
\overline{\epsilon}&\leq E(u(x,T_1);M)+E(\widetilde{u};S^2)\\&
\leq \lim_{i\to\infty}E(u_i;M)
\leq E(\phi)+(\Lambda-\lambda)E(\psi)<\overline{\epsilon},
\end{align*}
which is a contradiction.

By Corollary \ref{cor:energy-t}, we have
$$\int_0^\infty\int_M|\partial_t u|^2dxdt\leq C.$$
Then there exists a time sequence $t_i\to\infty$ such that $\partial_tu(x,t_i)\to 0,\ a.e.\ x\in M$. Since the flow dose not blow up, by Lemma \ref{lem:small-energy-regularity}, there exists a subsequence of $(u(x,t_i),v(x,t_i))$ which is still denoted by $(u(x,t_i),v(x,t_i))$ such that
$$u(x,t_i)\to u_\infty(x)\ {\text and}\ v(x,t_i)\to v_\infty(x)\ {\text in} \ C^2(M)$$
as $i\to\infty$, where $(u_\infty,v_\infty)$ is a harmonic map from $M$ to the Lorentzian manifold $N\times \R$ with the boundary data $(u_\infty,v_\infty)|_{\partial M}=(\phi,\psi)$. Then the theorem is proved.
\end{proof}

\

Now, we proceed to prove our last Theorem \ref{thm:convex-func-target-mfd}.

\begin{proof}[\textbf{Proof of Theorem \ref{thm:convex-func-target-mfd}}]
By the proof of Theorem \ref{thm:small-initial-energy}, we only need to prove that the solution $(u,v)$ does not blow up at any time $t\in(0,\infty]$.

If not, then we may assume $T_1$ is the first singular (or blow up) time and $(x_0,T_1)\in S(u,T_1)$ is a singular point (or energy concentration point). By Theorem \ref{thm:blow-up}, we can get a nontrivial harmonic sphere $\widetilde{u}:{\mathbb{S}}^2\to N$. However, under the assumptions of Theorem \ref{thm:convex-func-target-mfd}, the main result in \cite{lizhu} tells us that the manifold $N$ cannot admit any nontrivial harmonic sphere. This is a contradiction and we have finished  the proof.
\end{proof}

\section{An elliptic scheme}
In \cite{H-K-L} Hardt-Kinderlehrer-Lin studied the   existence and partial regularity of static liquid crystal and developed a method of general interest that can also be adapted to our problem. They considered the  functional
\begin{equation}\label{staticliquidcrystal}
\varepsilon^*(u,\psi)=\int_\Omega [W(\nabla u, u)-A(\nabla\psi, u)]dx,
\end{equation}
where $\Omega\subset \R^3$ is a smooth bounded domain, $(u,\psi)\in H^1(\Omega, S^2)\times H^1(\Omega)$, $W$ is the Ossen-Frank free  energy density and $A$ satisfies the coercivity condition $$A(\xi,u)\geq \lambda |\xi|^2$$ for $\xi\in\R^3$.
This functional is not bounded from below. Nevertheless, \cite{H-K-L} could apply a minimization scheme by imposing Gauss's law as a constraint and then obtain critical points. Let us outline their approach. For any $u\in H^1(\Omega,S^2)$, using the Euler-Lagrange equation of $\psi$ and the boundary condition that $\psi|_{\partial\Omega}=\varphi_0$, we know that there exists a unique solution denoted by $\Phi(u)$ which is the unique minimizer of $\int_\Omega A(\nabla\psi, u)dx$ among $\psi\in H^1(\Omega)$ with $\psi|_{\partial \Omega}=\varphi_0$. Moreover, there holds $$\|\nabla\Phi(u)\|_{L^2(\Omega)}\leq C(\lambda, \Omega,\varphi_0).$$ Then for $u\in H^1(\Omega, S^2)$, they let $\varepsilon(u)$ (or $\varepsilon_{\varphi_0}(u)$ to indicate the dependence on $\varphi_0$) denote the energy $$\varepsilon(u)=\varepsilon^*(u,\Phi(u)).$$ The functional $\varepsilon(u)$ only depends on $u$ and is bounded from below. Thus, the minimizer exists.

The structure of \eqref{lag} is similar to \eqref{staticliquidcrystal} and for the existence of Lorentzian harmonic map with Dirichlet boundary data, the method of \cite{H-K-L} can also be applied. In fact, for any $u\in H^1(M,N)$ and $\psi\in C^{2+\alpha}(\partial M)$, there exists a unique solution  $\Phi(u)$ to the equation \eqref{equation:07} which satisfies $$\|\nabla\Phi(u)\|_{L^2(M)}\leq C(\lambda,\Lambda, \psi, M, N).$$ Then the functional $E_g(u, \Phi(u);M)$ is bounded from below among the class $u\in H^1(M,N)$, and we can deduce the existence of a minimizer by known techniques. Thus, this scheme provides an alternative method for  the existence of Lorentzian harmonic map in a given homotopy class. We skip the details, as the level of difficulty seems to be about the same as for our parabolic-elliptic approach.

\bigskip

{\bf Acknowledgement} {\it Part of this work was done during the last author's visit to the Max Planck Institute for Mathematics in the
Sciences. The author thanks the institute for its hospitality
and good working conditions. }

{\small

}

\end{document}